  \newcommand{\C}{\mathbb{C}}
  \newcommand{\N}{\mathbb{N}}
  \renewcommand{\P}{\mathbb{P}}
  \newcommand{\R}{\mathbb{R}}
  \newcommand{\Z}{\mathbb{Z}}
  \renewcommand{\j}{\mathbf{j}}
  \newcommand{\e}{\mathbf{e}}
  \newcommand{\bF}{\mathbf{F}}
  \newcommand{\bG}{\mathbf{G}}
  \newcommand{\bP}{\mathbf{P}}
  \renewcommand{\u}{\mathbf{u}}
  \renewcommand{\v}{\mathbf{v}}
  \newcommand{\x}{\mathbf{x}}
  \newcommand{\y}{\mathbf{y}}
  \newcommand{\z}{\mathbf{z}}
  \newcommand{\0}{\mathbf{0}}
  \newcommand{\1}{\mathbf{1}}
  \newcommand{\xib}{\mbox{\boldmath{$\xi$}}}
  \newcommand{\etab}{\mbox{\boldmath{$\eta$}}}
  \newcommand{\cC}{\mathcal{C}}
  \newcommand{\cF}{\mathcal{F}}
  \newcommand{\cG}{\mathcal{G}}
  \newcommand{\rD}{\mathrm{D}}
  \def\diag{\mathop{{\rm diag}}\nolimits}
  \newcommand{\hs}{\hspace*{\parindent}}
  \newcommand{\proof}{\hs \textbf{Proof.\ }}
  \newcommand{\trans}{^\top}
  \newcommand{\qed}{\hspace*{\fill} $\Box$\\}
  \newcommand{\dist}{\mathrm{dist}}
  \newcommand{\rC}{\mathrm{C}}
  \newcommand{\rS}{\mathrm{S}}
\def\setint[#1]{[#1]}
\newcommand{\cw}{\text{cw}}
\newcommand{\cwl}{\text{cw}^-}
\newcommand{\interior}{\operatorname{int}}
\newcommand{\exceptvar}[1]{_{\scriptstyle i_1\in\setint[m_1], \ldots, i_{#1-1}\in \setint[m_{#1-1}]\atop\scriptstyle  i_{#1+1}\in \setint[m_{#1+1}],\ldots, i_d\in \setint[m_d]}}
  \newtheorem{theo}{\bfseries \hs Theorem}[section]
  \newtheorem{lemma}[theo]{\bfseries \hs Lemma}
  \newtheorem{corol}[theo]{\bfseries \hs Corollary}
  \numberwithin{equation}{section} 
\begin{document}

 \title{Perron-Frobenius theorem for \\ nonnegative multilinear forms and extensions}
 \author{
 S. Friedland\footnotemark[1],\;
 S. Gaubert\footnotemark[2]
 and\;
 L. Han\footnotemark[4]
 }
 \renewcommand{\thefootnote}{\fnsymbol{footnote}}
 \footnotetext[1]{
 Department of Mathematics, Statistics and Computer Science,
 University of Illinois at Chicago, Chicago, Illinois 60607-7045,
 USA, \texttt{friedlan@uic.edu},
 }
 \footnotetext[2]{INRIA and CMAP, \'Ecole Polytechnique, 91128 Palaiseau C\'edex, \texttt{stephane.gaubert@inria.fr}}
 \footnotetext[4]{Mathematics Department, University of Michigan-Flint,
 Flint, MI 48502, \texttt{lxhan@umflint.edu} }
\footnotetext{This work was initiated during the workshop on Nonnegative matrix theory at the American Institute of Mathematics (AIM) in Palo Alto, December 1 - 5, 2008, and was partially supported by AIM.}
 \renewcommand{\thefootnote}{\arabic{footnote}}
 \date{November 20, 2010 }
 \maketitle
 \begin{abstract}
We prove an analog of Perron-Frobenius theorem
 for multilinear forms with nonnegative coefficients, and more
generally,
for polynomial maps with nonnegative coefficients.
We determine the geometric convergence rate of the power algorithm
to the unique
normalized eigenvector.
 \end{abstract}

 \noindent {\bf 2000 Mathematics Subject Classification.}
  15A48, 47H07, 47H09, 47H10.

 \noindent {\bf Key words.}
 Perron-Frobenius theorem for nonnegative tensors, convergence of the power algorithm.
 \section{Introduction}\label{intro}

 Let $f:\R^{m_1}\times\dots\times\R^{m_d}\to\R$ be a
 multilinear form:
 \begin{eqnarray}\label{defdmulfrm}
 f(\x_1,\ldots,\x_d):=\sum_{i_1\in\setint[m_1], \ldots,i_d\in \setint[m_d]}
 f_{i_1,\ldots,i_d}x_{i_1,1}\ldots
 x_{i_d,d}\enspace ,\\
 \x_j=(x_{1,j},\ldots,x_{m_j,j})\trans\in\R^{m_j},\;j\in \setint[d].
 \nonumber
 \end{eqnarray}
Here, and in the sequel, we write $\setint[d]$ for the set $\{1,\ldots,d\}$.
We assume the nontrivial case $d\ge 2, m_j\ge 2, j\in\setint[d]$.
 The above form induces the tensor
 $\cF:=[f_{i_1,\ldots,i_d}]\in \R^{m_1\times\dots\times m_d}$.
 We call $f$ \emph{nonnegative} if the corresponding tensor $\cF$ is
 nonnegative, denoted by $\cF\ge 0$, meaning that all the entries of $\cF$ are
 nonnegative.
 For $\u\in \R^m$ and $p\in (0,\infty]$ denote by $\|
 \u\|_p:=(\sum_{i=1}^m |u_i|^p)^{\frac{1}{p}}$ the $p$-norm of
 $\u$.  Let $\rS_{p,+}^{m-1}:=\{\0\le \u\in\R^m,\;\|\u\|_p=1\}$ be the
 $m-1$ dimensional unit sphere in the $\ell_p$ norm restricted to $\R_+^m$.  Let
 $p_1,\ldots,p_d\in (1,\infty)$ and consider a critical point
 $(\xib_1,\ldots,\xib_d)\in
 \rS_{p_1,+}^{m_1-1}\times\ldots\times\rS_{p_d,+}^{m_d-1}$ of
 $f|\rS_{p_1,+}^{m_1-1}\times \ldots\times\rS_{p_d,+}^{m_d-1}$.
 It is straightforward to show
 that each critical point satisfies the following equality~\cite{Lim05}:
 \begin{eqnarray}\label{critveceq}
\sum\exceptvar{j}
 f_{i_1,\ldots,i_d} x_{i_1,1}\ldots
 x_{i_{j-1},j-1}x_{i_{j+1},j+1}\ldots x_{i_d,d}=\lambda
 x_{i_j,j}^{p_j-1},\\
 i_j\in\setint[m_j],\quad
\x_j\in\rS_{m_j,+}^{p_j-1},\quad j\in \setint[d].\nonumber
 \end{eqnarray}
 Note that for $d=2$ and $p_1=p_2=2$, $\xib_1,\xib_2$ are the
 left and right singular vectors of the nonnegative matrix
 $\cF\in\R^{m_1\times m_2}$.

In the case of nonnegative matrices, irreducibility can be defined
in two equivalent ways, either by requiring the directed graph
associated with the matrix to be strongly connected,
or by requiring that there is no non-trivial \emph{part}
(relative interior of a face) of the standard positive cone
that is invariant
by the action of the matrix, i.e., that the matrix cannot
be put in upper block triangular form by applying the same permutation
to its rows and columns. In the case of tensors, and more generally,
of polynomial maps, both approaches can be extended, leading to distinct notions.

 In the present setting, the tensor $\cF$ is associated with an {\em undirected} $d$-partite graph
 $G(\cF)=(V,E(\cF))$, the vertex set of which is the disjoint union
$V=\cup_{j=1}^d V_j$, with $V_j=\setint[m_j], j\in \setint[d]$.
The edge $(i_k,i_l)\in V_k\times V_l, k\ne l$ belongs
to $E(\cF)$ if and only if $f_{i_1,i_2,\ldots,i_d}>0$ for some $d-2$
 indices $\{i_1,\ldots,i_d\}\backslash\{i_k,i_l\}$.
The tensor $\cF$ is
 called \emph{weakly irreducible} if the graph $G(\cF)$ is connected.
 We call $\cF$ \emph{irreducible} if for each proper nonempty subset
 $\emptyset \ne I\subsetneqq V$,
 the following
 condition holds:  Let $J:=V\backslash I$.  Then there exists
 $k\in [d]$, 
 $i_k\in I\cap V_k$ and $i_j\in J\cap V_j$ for each $j\in\setint[d]
 \backslash\{k\}$ such that $f_{i_1,\ldots,i_d}>0$.
 Our definition of irreducibility agrees with \cite{CPZ08,NQZ09}.
 We will show that if $\cF$ is weakly irreducible then $\cF$ is
 irreducible.

 The main result of this paper gives sufficient conditions on the uniqueness of positive
 solution of the system (\ref{critveceq}), which was studied in \cite{Lim05}, for weakly irreducible and irreducible nonnegative tensors.
 \begin{theo}\label{maintheo}  Let $f:\R^{m_1}\times\ldots\times\R^{m_d}\to\R$
 be a nonnegative multilinear form.  Assume that $\cF$ is weakly
 irreducible and $p_j\ge d$ for $j\in [d]$. 
 Then the system
 (\ref{critveceq}) has a unique positive solution $(\x_1,\ldots,\x_d)>\0$.
 If $\cF$ is irreducible
 then the system
 (\ref{critveceq}) has a unique solution $(\x_1,\ldots,\x_d)$,
 which is necessarily positive.
 \end{theo}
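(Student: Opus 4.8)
The natural framework here is nonlinear Perron–Frobenius theory via Hilbert's projective metric. I would reformulate the system (\ref{critveceq}) as a fixed-point equation. Collect the $d$ blocks of variables into a single vector $\x=(\x_1,\ldots,\x_d)$ living in the product cone $\R_+^{m_1}\times\cdots\times\R_+^{m_d}$, and define a map $T$ whose $j$-th block sends $\x$ to the vector with $i_j$-th coordinate
\[
\Big(\sum\exceptvar{j} f_{i_1,\ldots,i_d}\,x_{i_1,1}\cdots x_{i_{j-1},j-1}x_{i_{j+1},j+1}\cdots x_{i_d,d}\Big)^{1/(p_j-1)}\enspace .
\]
Thus $T$ is a map that is homogeneous of degree $\frac{1}{p_j-1}\cdot(d-1)$ in the appropriate sense; the hypothesis $p_j\ge d$ guarantees that $\frac{d-1}{p_j-1}\le 1$, so $T$ is order-preserving and \emph{subhomogeneous} (more precisely, each block-component is a monotone, degree-$\le 1$ homogeneous map composed of sums of monomials raised to a power $\le 1$). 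Then (\ref{critveceq}) says exactly that $(\x_1,\ldots,\x_d)$ is a positive eigenvector of $T$, $T\x=\lambda'\x$ for a suitable rescaling of $\lambda$. I would work on the open product cone and use the product Hilbert metric (or, equivalently, Thompson's part metric). The key fact to invoke is that a monomial $x_1^{a_1}\cdots x_k^{a_k}$ with $a_i\ge 0$ contracts the Hilbert metric by factor $\sum a_i$, sums are nonexpansive in Hilbert metric (by the seminorm/convexity argument of Birkhoff–Hopf), and the $\ell_1$-normalization used to land on the spheres is nonexpansive as well.

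The proof then splits into two parts. First, \textbf{existence}: the normalized map $\hat T(\x)=T(\x)/\|T(\x)\|$ (with an appropriate product normalization so that $\x_j\in\rS_{p_j,+}^{m_j-1}$) maps the compact convex product of spheres into itself; to make this well defined on the interior we need $T\x>\0$ whenever $\x>\0$, which follows from weak irreducibility — connectedness of $G(\cF)$ ensures no block-component of $T\x$ vanishes when $\x>\0$ (indeed each $V_j$ has a positive-weight edge to the rest, so each sum defining the $j$-th block has at least one positive term). Brouwer's fixed point theorem gives a fixed point; weak irreducibility again (via a graph-connectivity argument, propagating positivity of one coordinate to all others through positive entries of $\cF$) shows the fixed point is strictly positive, hence a genuine positive solution of (\ref{critveceq}). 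This is essentially the content of the Perron–Frobenius existence statement for weakly irreducible tensors already in the literature (\cite{CPZ08,NQZ09,Lim05}), and I would cite it if convenient rather than reprove it.

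Second, \textbf{uniqueness}, which is the crux. Suppose $(\x_1,\ldots,\x_d)$ and $(\y_1,\ldots,\y_d)$ are two positive solutions. I claim the product Hilbert metric $\delta$ between them is $0$. Using the contraction estimates above together with the \emph{strict} improvement coming from weak irreducibility: because $G(\cF)$ is connected, after composing $T$ with itself enough times (at most the diameter of $G(\cF)$), every block-component of $T^N\x$ is a sum in which \emph{all} variables $x_{i,j}$ appear with positive total exponent; this forces a strict contraction $\delta(T^N\x,T^N\y)<\delta(\x,\y)$ unless $\x,\y$ are already projectively equal blockwise. Since eigenvectors are fixed (up to scaling) by $T^N$, we get projective equality: $\y_j=c_j\x_j$ for scalars $c_j>0$. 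Plugging back into (\ref{critveceq}) and matching the homogeneity degrees — the $j$-th equation scales by $\prod_{k\ne j}c_k$ on the left and $c_j^{p_j-1}$ on the right — yields a linear system in $\log c_k$ whose only solution on the product of unit spheres is $c_1=\cdots=c_d=1$; here one uses $p_j\ge d\ge 2$ so the relevant matrix $(d-1)\mathbf{1}\mathbf{1}\trans - \diag(p_j-1)\cdot(\text{something})$ is nonsingular on the normalization hyperplane. For the \textbf{irreducible} case, one first shows (as promised in the excerpt) that weak irreducibility is implied by irreducibility — no, the other direction: the excerpt says weakly irreducible $\Rightarrow$ irreducible, so irreducibility is the \emph{weaker} hypothesis; hence for irreducible $\cF$ I would argue directly that any \emph{boundary} solution is impossible. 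If some coordinate of $\x_j$ vanishes, let $I$ be the set of vanishing coordinates (across all blocks, a nonempty proper subset of $V$ since the eigenvalue is positive); irreducibility provides indices realizing a positive $f_{i_1,\ldots,i_d}$ with exactly one index in $I$, and writing out the corresponding equation of (\ref{critveceq}) for that $i_k\in I\cap V_k$ gives a strictly positive left-hand side but a zero right-hand side — contradiction. So any solution is positive, and weak irreducibility need not hold, but positivity of the solution lets us run a \emph{localized} version of the Hilbert-metric contraction argument on the support subtensor, which is weakly irreducible on its own vertex set; uniqueness then follows as before.

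The main obstacle is the uniqueness argument, specifically making the Hilbert-metric contraction strict: the estimate $\delta(T\x,T\y)\le \max_j\frac{d-1}{p_j-1}\,\delta(\x,\y)$ only gives a \emph{non-strict} contraction when some $p_j=d$, so one cannot simply quote a Banach-type fixed point theorem. The resolution is the graph-theoretic strict-contraction lemma (iterate $T$, use connectivity to get all variables into every component, then invoke that a sum of monomials, at least one of which strictly depends on a coordinate where $\x$ and $\y$ differ projectively, is a strict contraction in Hilbert metric) — this is the technical heart and is where weak irreducibility is genuinely used. The homogeneity/scaling bookkeeping at the end, showing $c_j\equiv 1$, is routine linear algebra given $p_j\ge d$.
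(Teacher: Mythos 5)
Your approach is genuinely different from the paper's, and it has a real gap at exactly the spot you flag as ``the technical heart.'' The paper does \emph{not} work with the raw map $T$ whose $j$-th block is the left-hand side of~\eqref{critveceq} raised to the power $1/(p_j-1)$. Instead it defines $\bF$ (formula~\eqref{defFmap}) by multiplying the inner sum by the compensating factor $x_{i_j,j}^{p-p_j}\|\x_j\|_{p_j}^{p_j-d}$ and taking the $1/(p-1)$ power with $p=\max_j p_j$, so that every block of $\bF$ is homogeneous of degree \emph{exactly} one and the whole map is an order-preserving selfmap of $\R_{>0}^{m_1+\cdots+m_d}$. This yields a genuine eigenvalue problem $\bF(\y)=\mu\y$ (one $\mu$ for all blocks), to which the existence theorem of Gaubert--Gunawardena (Theorem~\ref{thm2gg04}) and the uniqueness theorem of Nussbaum (Theorem~\ref{nusth2.5}) apply directly. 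Your map $T$ has different homogeneity degrees $\tfrac{d-1}{p_j-1}$ in different blocks, so $T\x=\lambda'\x$ for a single scalar $\lambda'$ does not literally encode~\eqref{critveceq} (the block-$j$ ``eigenvalue'' would be $\lambda^{1/(p_j-1)}$); this can be patched by always working on the product of $\ell_{p_j}$ spheres, but it is the first hint that you are fighting the wrong normalization.

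The essential gap is the claimed strict contraction in the boundary case $p_j=d$. When $p_j>d$ for all $j$ you do get a uniform contraction factor $\max_j\tfrac{d-1}{p_j-1}<1$ and Banach's fixed point theorem finishes. But when some $p_j=d$, that factor equals $1$ and you write that ``because $G(\cF)$ is connected, after composing $T$ with itself enough times\ldots\ this forces a strict contraction.'' For degree-$1$ order-preserving homogeneous maps, the fact that after $N$ iterations every component depends effectively on every variable does \emph{not} by itself give strict contraction of Hilbert's metric; it is the kind of statement that is true for \emph{linear} maps (via Birkhoff--Hopf, where ``depends on all variables'' means ``all matrix entries are positive'') but has no routine nonlinear analogue. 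What the paper uses instead is Nussbaum's local criterion (Theorem~\ref{nusth2.5}): compute $A=\rD\bF(\y)$ at the eigenvector $\y$, observe that its support di-graph is $\cG(\bF)$ hence irreducible, conclude $\rho(A)$ is a simple eigenvalue, and read off uniqueness. That derivative argument is precisely the rigorous replacement for the ``iterate until strictly contracting'' heuristic; without it, your sketch has no proof of the central step. (Your final $c_j\equiv1$ bookkeeping is fine but overbuilt: once projective equality blockwise is known and both solutions are on the $\ell_{p_j}$-spheres, $c_j=1$ is immediate.)

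One smaller point: you parse the introduction's sentence as ``weakly irreducible $\Rightarrow$ irreducible'' and conclude irreducibility is the weaker hypothesis, then reach for a ``localized contraction on the support subtensor.'' The implication proved in the paper (Lemma~\ref{indecirred}) is the reverse: irreducible $\Rightarrow$ weakly irreducible. Once that is in place, the irreducible case is handled cleanly: the first part already gives uniqueness of the positive solution, and a short argument (if some $x_{i_k,k}=0$ while $\|\x_k\|_{p_k}>0$, irreducibility produces a positive $f_{i_1,\ldots,i_d}$ forcing the left-hand side of the $i_k$-th equation in~\eqref{critveceq} to be positive while the right-hand side vanishes) rules out boundary solutions. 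No subtensor analysis is needed.
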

 For $d=2$ and $p_1=p_2=2$ this theorem is a simplified version of the classical
 Perron-Frobenius theorem for the symmetric matrix
 \[
 A=\left[\begin{array}{cc}0&\cF\\\cF\trans&0\end{array}\right]
 \enspace .
 \]
 For $d\ge 3$ and $p_1=\ldots=p_d=d$ our theorem does not follow
 from the main result in~\cite{CPZ08}, in which the tensor is required
to be irreducible (rather than weakly irreducible).
We also give examples where the statement of the theorem fails
 if the conditions $p_j \ge d$ for $j\in\setint[d]$
are not  satisfied.

Theorem~\ref{maintheo} deals with the eigenproblem~\eqref{critveceq},  which is of a variational nature. In other words, the polynomials at the left-hand side of~\eqref{critveceq} determine the gradient of a single form. However,
our next result, Theorem~\ref{mtheoa} below, shows that similar conclusions holds for more general polynomial eigenvalue problems. We also address computational aspects: in Corollary~\ref{cor-nussbaum} and~\ref{coro-spectralgap}, we give a sufficient condition, %
of a combinatorial nature (weak primitivity), which guarantees
that the power algorithm converges to a normalized eigenvector, and
we derive a spectral gap type formula for the asymptotic convergence rate.
We note that a similar power algorithm for nonnegative tensors was
introduced in~\cite{NQZ09} (in a slightly
more special context), where extensive numerical tests were given.
The sequence produced by the present power
algorithm can be seen to coincide with the one of~\cite{NQZ09}, up to a
normalization factor, and therefore, the present results imply that
the algorithm of~\cite{NQZ09} does converge,
under the weak primitivity condition.

We now describe briefly the organization of this paper.  In \S2 we recall the results of \cite{GG04,Nus88}
on existence and uniqueness of positive eigenvectors of homogeneous monotone maps that act on the interior of the cone $\R_+^n$.  In \S3 we prove Theorem~\ref{maintheo} by constructing
a homogeneous monotone map of degree one and invoking the
results of \cite{GG04,Nus88}.  The extension to eigenproblems
involving polynomials maps with nonnegative coefficients
is presented in~\S\ref{sec-ext}. We derive in particular
from a result of~\cite{nuss86} an analogue of Collatz-Wielandt's
minimax characterization of the Perron eigenvalue.
The power algorithm is analysed in \S\ref{sec-algo}.
Finally, in \S\ref{sec-ex} we give numerical
examples showing that the conclusion of Theorem~\ref{maintheo} no longer holds for $p_1=\ldots=p_d<d$.

 \section{Eigenvectors of homogeneous monotone maps on $\R_+^n$}

 Let $\R_+:=[0,\infty),\R_{>0}:=(0,\infty)$,
 $\R_+^n=(\R_+)^n, \R_{>0}^n=(\R_{>0})^n$ be the cone of nonnegative vectors
 and its interior respectively. For $\x,\y\in\R^n$ we denote $\y\ge \x, \y\gneq \x,\y>\x$ if $\y-\x\in\R_+^n,
 \y-\x\in\R_+^n\setminus\{\0\}, \y-\x\in\R_{>0}^n$ respectively.
 Recall that the Hilbert metric on $\R_{>0}^n$ is the map $\dist:\R_{>0}^n\times\R_{>0}^n\to [0,\infty]$ defined as follows. Let $\x=(x_1,\ldots,x_n)\trans,\y=(y_1,\ldots,y_n)\trans >\0$.  Then
 $\dist(\x,\y)=\max_{i\in[n]}\log\frac{y_i}{x_i}-\min_{i\in[n]}\log\frac{y_i}{x_i}$.  Note that $\dist(\x,\y)=\dist( \y,\x)\ge 0$ and equality holds if and only if $\x$ and $\y$ are colinear.
 Furthermore the triangle inequality holds.  The Hilbert metric has a simple interpretation.
 Let $\alpha,\beta>0$ and assume that $\0<\alpha\x\le \y\le \beta\x$.  Then $\dist(\x,\y)\le \log\frac{\beta}{\alpha}$.  Equality is achieved when $\alpha=\alpha_{\max}$ and $\beta=\beta_{\min}$ are the maximal and the minimal possible satisfying the above inequality.
 Fix $\psi\in\R_+^n\setminus\{\0\}$ and consider the open polytope
 $\Sigma(\psi)=\{\x>\0, \psi\trans \x=1\}$.  Then the Hilbert metric is indeed a metric on $\Sigma(\psi)$.
 Equivalently, the Hilbert metric is a metric on the space of rays $\P\R_{>0}^n$ in $\R_{>0}^n$. Note that both spaces are complete for Hilbert's metric.

 In this section we assume that $\bF=(F_1,\ldots,F_n)\trans: \R_{>0}^n\to\R_{>0}^n$ satisfies the following properties.  First, $\bF$ is homogeneous, (or homogeneous of degree one), i.e. $\bF(t\x)=t\bF(\x)$ for each $t>0$ and $\x> \0$.  Second, $\bF$ is monotone, i.e. $\bF(\y)\ge \bF(\x)$ if $\y\ge\x> \0$.  So $\bF$ can be viewed as a map $\hat \bF:
 \P\R_{>0}^n\to\P\R_{>0}^n$.
 $\x> \0$ is called an \emph{eigenvector} of $\bF$ in $\R_{>0}^n$ if $\bF(\x)=\lambda\x$ for some $\x>\0$ and an \emph{eigenvalue} $\lambda > 0$.

 It is well known that $\bF$ is nonexpansive with respect to Hilbert metric, i.e. $\dist(\bF(\x),\bF(\y))\le
 \dist(\x,\y)$.  Indeed
 \begin{eqnarray*}
 \alpha_{max}\x\le \y\le \beta_{\min}\x\Rightarrow \alpha_{\max}\bF(\x)=\bF(\alpha_{\max}\x)\le
 \bF(\y)\le \bF(\beta_{\min}\x)=\beta_{\min}\bF(\x)\Rightarrow\\
 \dist(\bF(\x),\bF(\y))\le \log\frac{\beta_{\min}}{\alpha_{\max}}=\dist(\x,\y).
 \end{eqnarray*}

 Assume that $\bF$ is a contraction, i.e. $\dist(\bF(\x),\bF(\y))\le
 K\dist(\x,\y)$ for some $K\in [0,1)$.  Then the Banach fixed point theorem implies that $\hat \bF$
 has a unique fixed point in $\P\R_{>0}^n$.  Hence $\bF$ has a unique eigenvector
 in $\R_{>0}^n$.

 Another general criteria for existence of eigenvectors $\x\gneq \0$ can be obtained
 by using the Brouwer fixed point theorem.
 Assume that $\bF$ extends to a continuous map $\bF(\x):\R_+^n\to \R_+^n$ and suppose that
 $\bF^{-1}(\0)=\0$.  Let $\psi>\0$.  Define
 $\bG:\textrm{Closure }\Sigma(\psi)\to\textrm{Closure }\Sigma(\psi)$:
 \begin{equation}\label{defG}
 \bG(\x)=\frac{1}{\psi\trans \bF(\x)} \bF(\x),
 \end{equation}
 for $\x\gneq \0$.
 Then $\bG$ has a fixed point $\y\in\textrm{Closure }\Sigma(\psi)$ which is an eigenvector of $\bF$ in $\R_+^n$.
 However $\y$ may be on the boundary of $\Sigma(\psi)$.

 Theorem 2 in \cite{GG04} gives a sufficient condition for existence of an eigenvector of $\bF$
 in $\R_{>0}^n$.
 For $u\in (0,\infty)$ and $J\subseteq [n]$ denote by $\u_J=(u_1,\ldots,u_n)\trans>\0$ the following vector.
 $u_i=u$ if $i\in J$ and $u_i=1$ if $i\not\in J$.  Then $F_i(\u_J)$ is a nondecreasing function in $u$.
 The associated di-graph $\cG(\bF)\subset [n]\times [n]$ is defined as follows.
 $(i,j)\in\cG(\bF)$ if and only if $\lim_{u\to\infty} F_i(\u_{\{j\}})=\infty$.
 \begin{theo}\label{thm2gg04} (\cite[Theorem 2]{GG04}) Let $\bF: \R_{>0}^n\to\R_{>0}^n$ be homogeneous and monotone. If $\cG(\bF)$ is strongly connected then $\bF$ has an eigenvector in $\R_{>0}^n$.
 \end{theo}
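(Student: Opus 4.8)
The plan is to realise the desired eigenvector as a fixed point of the induced map $\hat\bF$ on $\P\R_{>0}^n$, produced from the two monotone Collatz--Wielandt functions attached to $\bF$. For $\x>\0$ set
\begin{equation*}
t(\x):=\min_{i\in\setint[n]}\frac{F_i(\x)}{x_i},\qquad T(\x):=\max_{i\in\setint[n]}\frac{F_i(\x)}{x_i}.
\end{equation*}
Both are positive, homogeneous of degree $0$ (hence well defined on $\P\R_{>0}^n$) and continuous there, since a monotone map on an open cone is continuous. Applying the monotone homogeneous map $\bF$ to the inequality $t(\x)\,\x\le\bF(\x)$ and using $\bF(t(\x)\x)=t(\x)\bF(\x)$ gives $t(\bF(\x))\ge t(\x)$, and symmetrically $T(\bF(\x))\le T(\x)$; as $t$ and $T$ are scale-invariant this reads $t(\hat\bF\x)\ge t(\x)$ and $T(\hat\bF\x)\le T(\x)$. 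Finally $t(\x)\le T(\x)$ always, with equality exactly when $\bF(\x)=t(\x)\x$, i.e.\ when $\x$ is an eigenvector of $\bF$ in $\R_{>0}^n$ with eigenvalue $t(\x)>0$. It therefore suffices to produce a point at which $t=T$.

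Consider the orbit $\y_k:=\hat\bF^k(\1)$. By the above, $t(\y_k)$ is non-decreasing and $T(\y_k)$ non-increasing, with $t(\y_k)\le T(\y_k)$, so $t(\y_k)\uparrow t_\infty$ and $T(\y_k)\downarrow T_\infty$ for some $0<t(\1)\le t_\infty\le T_\infty\le T(\1)<\infty$. The first use of the hypothesis is to show that $\{\y_k\}$ is precompact in $\P\R_{>0}^n$: normalising representatives so that $\max_i(\y_k)_i=1$, suppose a subsequence converges to $\bar\y$ with $S:=\supp\bar\y\subsetneq\setint[n]$. Since $\cG(\bF)$ is strongly connected there is an edge $(i,j)\in\cG(\bF)$ with $i\notin S$ and $j\in S$ (take a directed path from a vertex outside $S$ to a vertex of $S$, and pick its first edge entering $S$). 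Along the subsequence the $j$-th coordinate is bounded below by a positive constant while the whole vector is bounded above by $\1$; combining this with the divergence $\lim_{u\to\infty}F_i(\u_{\{j\}})=\infty$ that defines the edge $(i,j)$ and with monotonicity of $\bF$, one derives a positive lower bound for the $i$-th coordinate after one step of $\hat\bF$, contradicting $(\y_{k})_i\to 0$. Hence, after passing to a subsequence, $\y_k\to\x^*\in\R_{>0}^n$ with $t(\x^*)=t_\infty$ and $T(\x^*)=T_\infty$.

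It remains to rule out $t_\infty<T_\infty$, which is the second and main place where strong connectivity is used. Assume $t_\infty<T_\infty$ and pass to the $\omega$-limit set $\Omega$ of the orbit; by the previous step $\Omega$ is a nonempty compact subset of $\P\R_{>0}^n$ with $\hat\bF(\Omega)=\Omega$, and $t\equiv t_\infty$, $T\equiv T_\infty$ on $\Omega$. For $\x\in\Omega$ write $\bF(\x)=t_\infty\x+\r(\x)$ with $\r(\x)\ge\0$; then $A(\x):=\{\,i:F_i(\x)=t_\infty x_i\,\}$ is the set of vanishing coordinates of $\r(\x)$, and it is nonempty (the minimum $t_\infty$ is attained) and proper (the maximum $T_\infty>t_\infty$ is attained). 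Feeding this splitting into $\bF$ once more, monotonicity together with the edge condition of $\cG(\bF)$ forces that if $i\in A(\hat\bF\x)$ then $k\in A(\x)$ for every $k$ with $(i,k)\in\cG(\bF)$; iterating this inside $\Omega$ and invoking strong connectivity of $\cG(\bF)$ forces $A\equiv\setint[n]$ on $\Omega$, i.e.\ $t_\infty=T_\infty$, a contradiction. Hence $t_\infty=T_\infty=:\lambda>0$, so $F_i(\x^*)=\lambda x^*_i$ for all $i$, and $\x^*\in\R_{>0}^n$ is the sought eigenvector. The step I expect to be the genuine obstacle is making the two ``propagation along $\cG(\bF)$'' arguments rigorous: an edge $(i,j)$ only asserts that $F_i(\u_{\{j\}})\to\infty$ as $u\to\infty$, possibly sublinearly in $u$, so $F_i$ need not be bounded below by a linear function of $x_j$ (nor strictly increasing in $x_j$) near a given point; the correct accounting --- carried out in \cite{GG04}, in the spirit of Nussbaum's recession-map analysis \cite{Nus88} --- tracks the growth of coordinate ratios around the cycles of $\cG(\bF)$ rather than along single edges.
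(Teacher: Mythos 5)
The paper offers no proof of this theorem --- it is quoted from \cite{GG04} --- so the only question is whether your argument is sound, and it is not. Your scheme is the power iteration started at $\1$, and it hinges on the claim that strong connectivity of $\cG(\bF)$ forces the Collatz--Wielandt bounds $t(\y_k)\le T(\y_k)$ to merge in the limit. That claim is false, for the usual imprimitivity reason. Take $n=2$ and $\bF(x_1,x_2)=(2x_2,x_1)\trans$, the linear map of the irreducible but imprimitive nonnegative matrix $\left(\begin{smallmatrix}0&2\\1&0\end{smallmatrix}\right)$: then $\cG(\bF)$ is the strongly connected $2$-cycle $1\to2\to1$, the normalized orbit of $\1$ is $2$-periodic (alternating between the rays of $(1,1)\trans$ and $(2,1)\trans$), and $t(\y_k)\equiv1$, $T(\y_k)\equiv2$ for all $k$, whereas the Perron eigenvalue is $\sqrt2$. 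Your propagation rule ``$i\in A(\hat\bF\x)\Rightarrow k\in A(\x)$ for every $(i,k)\in\cG(\bF)$'' is in fact satisfied on this $\omega$-limit set (one checks $A((1,1)\trans)=\{2\}$ and $A((2,1)\trans)=\{1\}$), yet it never forces $A\equiv\setint[n]$: the sets $A(\cdot)$ simply rotate around the cycle. So the final step of your argument is not a gap to be filled but a false implication, and no proof of Theorem~\ref{thm2gg04} can assert that the $\omega$-limit of the orbit of $\1$ is a single eigenray; strong connectivity does not give convergence of the power method --- for that one needs primitivity, as in Corollary~\ref{cor-nussbaum}.

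The precompactness step has the weaker defect you already flagged (an edge $(i,j)$ only gives $\lim_{u\to\infty}F_i(\u_{\{j\}})=\infty$, possibly sublinearly, so one-edge-at-a-time bounds cannot control the normalized orbit), and that one is repairable; in fact it is essentially the real content of the proof in \cite{GG04}. There one passes to additive/logarithmic coordinates and proves that strong connectivity of $\cG(\bF)$ makes every nonempty slice $\{\x>\0:\alpha\x\le\bF(\x)\le\beta\x\}$ bounded in Hilbert's projective metric, via an accounting of growth of coordinate quotients around the cycles of $\cG(\bF)$, as you anticipated. The existence of an interior eigenvector is then deduced from this boundedness alone --- for example by letting $\epsilon\searrow0$ in the eigenvectors of strictly contracting perturbations $\bF+\epsilon\Sigma$ with $\Sigma$ mapping the cone into its interior, or by a Brouwer-type fixed point argument on the closure of a bounded slice --- and never by asserting convergence of the unperturbed power iterates, which is exactly what fails in the example above.
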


 Theorem 2.5 of \cite{Nus88} gives a simple sufficient condition on the uniqueness of an eigenvector $\u>\0$
 of a homogeneous monotone $\bF$.  Assume that $\bF$ is $C^1$ in an open neighborhood $O\subset \R_{>0}^n$ of $\u$.  I.e. $\bF$ is continuous and has continuous first partial derivatives in $O$.  Consider the matrix
 $\rD\bF(\u)=[\frac{\partial F_i}{\partial x_j}(\u)]_{i=j=1}^n$.  Since $\bF$ is monotone it follows that
 $\rD\bF(\u)$ is a nonnegative square matrix.
 \begin{theo}\label{nusth2.5}(\cite[Theorem 2.5]{Nus88})  Let $\bF:\R_{>0}^n\to\R_{>0}^n$ be homogeneous and monotone.  Assume that
 $\u>\0$ is an eigenvector of $\bF$.  Suppose that $\bF$ is $C^1$ in some open neighborhood of $\u$.
 Assume that $A=\rD\bF(\u)$ is either nilpotent or has a positive spectral radius $\rho$ which is a simple root of the characteristic polynomial of $A$.  Then $\u$ is a unique eigenvector of $\bF$ in $\R_{>0}^n$.
 \end{theo}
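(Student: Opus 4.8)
The plan is to reduce everything to the linearization $A=\rD\bF(\u)$ at the eigenvector and to exploit the simplicity of $\rho(A)$ through a Cesàro (averaging) argument; the proof uses only the properties of monotone homogeneous maps recalled in this section. First one checks that all positive eigenvectors share a single eigenvalue: if $\bF(\u)=\lambda\u$ and $\bF(\v)=\mu\v$ with $\u,\v>\0$, let $\alpha$ (resp. $\beta$) be the largest (resp. smallest) scalar with $\alpha\u\le\v$ (resp. $\v\le\beta\u$), i.e.\ $\alpha=\min_i v_i/u_i$, $\beta=\max_i v_i/u_i$; applying $\bF$ and using monotonicity and homogeneity gives $\tfrac{\alpha\lambda}{\mu}\u\le\v\le\tfrac{\beta\lambda}{\mu}\u$, whence $\tfrac{\alpha\lambda}{\mu}\le\alpha$ and $\tfrac{\beta\lambda}{\mu}\ge\beta$, so $\lambda=\mu=:\rho>0$. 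Replacing $\bF$ by $\rho^{-1}\bF$ (still monotone and homogeneous), positive eigenvectors become exactly positive fixed points, and $\bF(\u)=\u$. Differentiating $\bF(t\x)=t\bF(\x)$ in $t$ at $t=1$ yields Euler's relation $\rD\bF(\x)\x=\bF(\x)$ near $\u$, hence $A\u=\u$ with $A\ge\0$; Perron--Frobenius gives $\rho(A)=1$, and since $1$ is a simple root of the characteristic polynomial we get an $A$-invariant splitting $\R^n=\span\{\u\}\oplus W$ with $1\notin\sigma(A|_W)$. (The nilpotent alternative of the statement cannot occur here, since $A\u=\u\neq\0$; it matters only for maps on lower-dimensional cones.)

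Now suppose, for contradiction, that $\v>\0$ is a fixed point not proportional to $\u$. After rescaling $\v$ we may assume $\max_i v_i/u_i=1$, so $\v\le\u$ with equality exactly on a set $\emptyset\neq T\subsetneq[n]$, and $m:=\min_i v_i/u_i<1$ is attained exactly on a set $\emptyset\neq S\subsetneq[n]$. For $t\in[0,1]$ put $\w(t)=(1-t)\u+t\v>\0$ and $\z_k(t)=\bF^k(\w(t))$. From $m\u\le\v\le\u$ one has the elementary inequalities
\[
((1-t)+tm)\,\u\ \le\ \w(t)\ \le\ \u,\qquad \v\ \le\ \w(t)\ \le\ \big(\tfrac{1-t}{m}+t\big)\v ,
\]
and all four bounds are preserved under the monotone homogeneous map $\bF^k$, which fixes $\u$ and $\v$. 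Comparing the lower bound $\z_k(t)\ge((1-t)+tm)\u$ with the $\v$-anchored upper bound on $S$, and using $\v\le\z_k(t)\le\u$ on $T$, we obtain the exact identities
\[
\z_k(t)_i=u_i\ (i\in T),\qquad \z_k(t)_i=((1-t)+tm)\,u_i\ (i\in S),\qquad\text{for all }k\ge0,\ t\in[0,1].
\]

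Since $\bF$ is $C^1$ near $\u$ and $\z_k(0)=\u$, an induction on $k$ (chain rule) shows each $\z_k$ is differentiable at $t=0$ with $\z_k'(0)=A^k(\v-\u)$. Differentiating the bounds above at $t=0$ gives, for every $k$,
\[
(m-1)\u\ \le\ A^k(\v-\u)\ \le\ \0,\qquad \bigl(A^k(\v-\u)\bigr)_i=0\ (i\in T),\qquad \bigl(A^k(\v-\u)\bigr)_i=(m-1)u_i\ (i\in S),
\]
so in particular the orbit $(A^k(\v-\u))_k$ is bounded. Writing $\v-\u=c\,\u+\q$ with $c\in\R$ and $\q\in W$, we have $A^k(\v-\u)=c\,\u+(A|_W)^k\q$, so $((A|_W)^k\q)_k$ is bounded; since $1\notin\sigma(A|_W)$, the Cesàro means $\tfrac1N\sum_{k=0}^{N-1}(A|_W)^k\q$ tend to $\0$ (eigenvalues of modulus $<1$ contribute $\0$; those of modulus $1$ are $\neq1$ and carry no Jordan block by boundedness, hence have vanishing Cesàro average). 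Thus $\tfrac1N\sum_{k=0}^{N-1}A^k(\v-\u)\to c\,\u$. But these means satisfy the same constraints as their terms, so restricting to $T$ forces $c=0$ and restricting to $S$ forces $c=m-1$; hence $m=1$, contradicting $m<1$. Therefore $\u$ is the unique eigenvector of $\bF$ in $\R_{>0}^n$.

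The main obstacle is the last step: turning the infinitely many first-order constraints $A^k(\v-\u)|_{S\cup T}=\text{const}$ into a contradiction. Boundedness of $(A^k(\v-\u))_k$ by itself does not force $\v-\u\in\span\{\u\}$; one needs that $1$ is not an eigenvalue of $A$ on $W$, which is precisely the simplicity hypothesis (mere one-dimensionality of $\ker(A-I)$ would not suffice), together with Cesàro summation to handle eigenvalues of modulus $1$ different from $1$. Securing the \emph{exact} identities on $S$ and $T$ for every iterate — via the two-sided squeeze using bounds anchored at both $\u$ and $\v$ — is the other delicate point; the reductions of the first paragraph are routine.
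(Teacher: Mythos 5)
The paper offers no proof of this statement at all: it is quoted verbatim from Nussbaum's memoir (\cite[Theorem 2.5]{Nus88}) and used as a black box, so there is no internal argument to compare yours with. Your blind proof is, as far as I can verify, correct and self-contained, and its route is worth summarizing: (i) the Collatz--Wielandt-type argument showing all positive eigenvectors share one eigenvalue, so one may normalize $\rho=1$; (ii) Euler's relation $A\u=\u$, $A\ge\0$, so $\rho(A)=1$ and algebraic simplicity yields the invariant splitting $\R^n=\span\{\u\}\oplus W$ with $1\notin\sigma(A|_W)$ (the nilpotent alternative is indeed vacuous once a positive eigenvector is assumed); (iii) the key device of anchoring everything at $\u$, the only point where $C^1$ smoothness is available: the two-sided order bounds $((1-t)+tm)\u\le\bF^k(\w(t))\le\u$ and $\v\le\bF^k(\w(t))\le(\frac{1-t}{m}+t)\v$ are preserved by every iterate and collapse to exact identities on $S$ and $T$, so differentiating at $t=0$ (one-sided, which suffices) gives $\bigl(A^k(\v-\u)\bigr)_i=0$ on $T$ and $=(m-1)u_i$ on $S$ for all $k$, plus boundedness of the orbit; (iv) the Cesàro argument then forces $c=0$ and $c=m-1$ simultaneously. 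I checked each of these steps and found no gap. Two small presentational points: the justification that bounded orbits exclude Jordan growth at peripheral eigenvalues is cleanest if you say that the spectral projections commute with $A$, so boundedness of $A^k(\v-\u)$ passes to each generalized eigenspace component, which must then lie in the corresponding eigenspace when the eigenvalue has modulus one; and your parenthetical claim that mere one-dimensionality of $\ker(A-I)$ would not suffice is debatable (decomposing along generalized eigenspaces, boundedness already pushes the eigenvalue-one component into $\ker(A-I)=\span\{\u\}$), but this aside does not affect the proof. Finally, ``unique eigenvector'' should of course be read as unique up to a positive scalar, which is exactly what your argument delivers and how the theorem is invoked in Section 3.
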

 Corollary 2.5 \cite{Nus88} deals with the case where $A$ is primitive.
 \begin{theo}\label{nuscor2.5} (\cite[Corollary 2.5]{Nus88})   Let $\bF:\R_{>0}^n\to\R_{>0}^n$ be homogeneous and monotone.  Assume that $\u>\0$ is an eigenvector of $\bF$.
 Suppose that $\bF$ is $C^1$ in some open neighborhood of $\u$.
 Assume that $A=\rD\bF(\u)$ is primitive.  Let $\psi\gneq \0$ and assume that $\psi\trans\u=1$.
 Then $\u>\0$ is the unique eigenvector of $\bF$ in $\R_{>0}^n$ satisfying the condition $\psi\trans\u=1$.
 Define $\bG:\R_{>0}^n\to \R_{>0}^n$ as in (\ref{defG}).
 Then $\lim_{m\to\infty} \bG^{\circ m}(\x)=\u$ for each $\x\in\R_{>0}^n$.
 \end{theo}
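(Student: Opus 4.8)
The plan is to normalise the eigenvalue to $1$, read uniqueness off Theorem~\ref{nusth2.5}, and prove convergence by combining relative compactness of orbits (from compactness of Hilbert balls about $\u$) with local contractivity of $\bG$ at $\u$. For the first point, write $\bF(\u)=\lambda\u$ with $\lambda>0$; replacing $\bF$ by $\lambda^{-1}\bF$ leaves $\bG$ in~(\ref{defG}) unchanged and preserves all hypotheses (the derivative at $\u$ becomes $\lambda^{-1}A$, still primitive), so we may assume $\lambda=1$, i.e.\ $\bF(\u)=\u$. Euler's identity for homogeneous maps of degree one gives $A\u=\rD\bF(\u)\u=\bF(\u)=\u$, so $\u>\0$ is a positive eigenvector of the nonnegative irreducible matrix $A$; by Perron--Frobenius its eigenvalue $1$ equals $\rho(A)$ and is a simple, strictly dominant root of the characteristic polynomial. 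Theorem~\ref{nusth2.5} then shows $\u$ is the unique eigenvector of $\bF$ in $\R_{>0}^n$ up to positive scalars; since $\psi\gneq\0$ and $\u>\0$, the constraint $\psi\trans\x=1$ selects exactly one point on each ray, so $\u$ is the unique eigenvector with $\psi\trans\u=1$, equivalently the unique fixed point of $\bG$ in $\Sigma(\psi)$. Applying the same reasoning to $\bF^{\circ p}$, whose derivative at $\u$ is the still primitive matrix $A^p$, we get that $\u$ is also the unique fixed point of $\bG^{\circ p}$ in $\Sigma(\psi)$ for every $p\ge 1$.

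Next, the global features of the iteration. The map $\bG$ sends $\R_{>0}^n$ into $\Sigma(\psi)$, hence restricts to a self-map of $\Sigma(\psi)$; as $\bF$ is Hilbert-nonexpansive and rescaling is a Hilbert isometry, $\dist(\bG(\x),\bG(\y))=\dist(\bF(\x),\bF(\y))\le\dist(\x,\y)$, so $\bG|_{\Sigma(\psi)}$ is nonexpansive with fixed point $\u$. For any $R<\infty$ the Hilbert ball $\bar B_{R}:=\{\x\in\Sigma(\psi):\dist(\x,\u)\le R\}$ is compact: with $m:=\min_i\log(u_i/x_i)$, $M:=\max_i\log(u_i/x_i)$ one has $e^{-M}\u\le\x\le e^{-m}\u$ and $M-m\le R$, and pairing with $\psi$ (using $\psi\trans\u=\psi\trans\x=1$) forces $m\le0\le M$, whence $e^{-R}\u\le\x\le e^{R}\u$, so $\bar B_{R}$ is a closed bounded subset of $\R_{>0}^n$. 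Consequently every orbit $(\bG^{\circ m}(\x))_{m\ge1}$ is relatively compact in $\Sigma(\psi)$, and $d_m:=\dist(\bG^{\circ m}(\x),\u)$ is nonincreasing, hence converges to some $r\ge0$; it remains to show $r=0$.

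Now the local analysis at $\u$. Differentiating~(\ref{defG}) at $\u$, using $\bF(\u)=\u$ and $\psi\trans\u=1$, gives $\rD\bG(\u)=A-\u(\psi\trans A)=(I-\u\psi\trans)A=:B$. Since $B\u=\0$ and $\psi\trans B=0$, $B$ induces an endomorphism of the tangent space $T_\u\Sigma(\psi)=\{v:\psi\trans v=0\}$. Let $w>\0$ be the left Perron eigenvector of $A$ and split $\R^n=\span(\u)\oplus H$ with $H=\{v:w\trans v=0\}$, both $A$-invariant and $\rho(A|_H)=\max_{i\ge2}|\mu_i(A)|<1$. For $\x=c\u+h\in T_\u\Sigma(\psi)$ with $h\in H$, a short computation gives $B\x=Ah-(\psi\trans Ah)\u$, whose $H$-component is $Ah$; iterating, the $H$-component of $B^m\x$ is $A^m h$, so $\|B^m\x\|\le C\,\|A^m|_H\|$ and thus $\rho(B|_{T_\u\Sigma(\psi)})\le\rho(A|_H)<1$. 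Hence $\u$ is a locally exponentially stable fixed point of $\bG|_{\Sigma(\psi)}$: there is $\epsilon>0$ with $\dist(\bG^{\circ m}(\y),\u)\to0$ whenever $\dist(\y,\u)\le\epsilon$. In particular, if the orbit of $\x$ ever enters $\bar B_{\epsilon}$, then $\bG^{\circ m}(\x)\to\u$ and $r=0$.

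Finally, closing the argument — and the main obstacle. Suppose $r>0$. Then the $\omega$-limit set $\Omega$ of the orbit is a nonempty compact $\bG$-invariant set with $\bG(\Omega)=\Omega$, contained in the Hilbert sphere $\{\dist(\cdot,\u)=r\}$ and disjoint from $\bar B_{\epsilon}$; being a surjective nonexpansive self-map of a compact metric space, $\bG|_\Omega$ is a bijective isometry. This already excludes periodic orbits inside $\Omega$, since $\bG^{\circ p}$ has the unique fixed point $\u\notin\Omega$ for every $p$. The remaining step — excluding non-trivial (almost-periodic) isometric limit sets, equivalently showing the iterates are asymptotically periodic so that the previous remark forces period $1$ and $\Omega=\{\u\}$ — is where primitivity of $A$ (rather than mere simplicity of its top eigenvalue, which only gave uniqueness) is genuinely needed, and is the crux; it is supplied by the corresponding analysis of~\cite{Nus88} (for $k$ with $A^k>0$, the monotonicity and homogeneity of $\bF^{\circ k}$ together with $\rD\bF^{\circ k}(\u)=A^k>0$ rule out such an invariant set). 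Granting this, $r=0$, so $\bG^{\circ m}(\x)\to\u$ for every $\x\in\R_{>0}^n$, which combined with the first paragraph completes the proof; moreover the estimate $\rho(B|_{T_\u\Sigma(\psi)})\le\max_{i\ge2}|\mu_i(A)|$ controls the asymptotic convergence rate.
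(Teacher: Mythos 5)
Your write-up is incomplete at exactly the point you yourself flag as the crux, and that gap is fatal to the proposal as a proof. Everything before it is sound: the normalization to $\lambda=1$, Euler's identity giving $A\u=\u$ so that $\rho(A)=1$ is simple and dominant, the use of Theorem~\ref{nusth2.5} (and of the same theorem applied to $\bF^{\circ p}$, with $A^p$ still primitive) to conclude that $\u$ is the unique fixed point of $\bG^{\circ p}$ in $\Sigma(\psi)$ for every $p$, the compactness of Hilbert balls in $\Sigma(\psi)$, the monotonicity of $d_m=\dist(\bG^{\circ m}(\x),\u)$, and the computation $\rD\bG(\u)=(I-\u\psi\trans)A$ with $\rho(\rD\bG(\u)|_{T_\u\Sigma(\psi)})\le|\lambda_2(A)|<1$, giving local attractivity. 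But the decisive global step --- showing that the limit $r$ of $d_m$ vanishes, i.e.\ excluding a nontrivial $\omega$-limit set $\Omega$ lying on the Hilbert sphere of radius $r>0$ on which $\bG$ acts as an isometry --- is not proved: you ``grant'' it by appealing to ``the corresponding analysis of~\cite{Nus88}'', which is precisely the result being proved (the paper states this theorem only as a quotation of \cite[Corollary 2.5]{Nus88}, so there is no proof in the paper to lean on either). The parenthetical hint that $\rD\bF^{\circ k}(\u)=A^k>0$ ``rules out such an invariant set'' is not an argument: the derivative at $\u$ carries only local information, whereas the hypothetical set $\Omega$ sits at Hilbert distance $r>0$ from $\u$, so positivity of $A^k$ at $\u$ by itself says nothing about it.

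What is genuinely missing is a global dynamical ingredient. One standard route is the theorem that bounded orbits of maps nonexpansive for Hilbert's projective metric on $\R_{>0}^n$ (equivalently, after taking logarithms, nonexpansive for the polyhedral seminorm $\max_i z_i-\min_i z_i$) are asymptotically periodic; combined with your correct observation that $\u$ is the only periodic point of $\bG$, this would force $\Omega=\{\u\}$ and hence convergence into the basin you constructed. But that asymptotic-periodicity statement is itself a substantive theorem, and neither it nor any alternative argument (for instance Nussbaum's own proof of his Corollary~2.5) is actually supplied. As written, the proposal establishes uniqueness of the normalized eigenvector, local attractivity, and the exclusion of nontrivial periodic orbits, but not the convergence assertion $\lim_{m\to\infty}\bG^{\circ m}(\x)=\u$ for arbitrary $\x\in\R_{>0}^n$, which is the main content of the statement.
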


 \section{Proof of the main theorem}
 \begin{lemma}\label{indecirred}
 Let $f$ be a $d$-multilinear nonnegative form for $d\ge 2$.
 Assume that $\cF$ is irreducible.  Then $\cF$ is
 weakly irreducible.  For $d=2$, $\cF$ is irreducible if and only if
 $\cF$ is weakly irreducible.
 \end{lemma}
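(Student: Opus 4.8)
The plan is to unwind the two definitions and argue combinatorially on the $d$-partite graph $G(\cF)$. The key observation is that a single nonzero entry $f_{i_1,\ldots,i_d}>0$ forces \emph{all} of the edges $(i_k,i_l)\in V_k\times V_l$, $k\neq l$, into $E(\cF)$ (here we use $d\ge 2$, so at least one such edge exists); consequently the indices $i_1,\ldots,i_d$ of any nonzero entry of $\cF$ all lie in a single connected component of $G(\cF)$. Equivalently, if $I\subseteq V$ is a union of connected components of $G(\cF)$ and $J:=V\setminus I$, then no nonzero entry of $\cF$ can have some index in $I$ and another index in $J$.

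For the first assertion I would argue by contraposition. Suppose $\cF$ is not weakly irreducible, i.e.\ $G(\cF)$ is disconnected, and let $C$ be a connected component of $G(\cF)$; then $I:=C$ is a proper nonempty subset of $V$, with $J:=V\setminus I$. By the key observation, there is no tuple $(i_1,\ldots,i_d)$ with $f_{i_1,\ldots,i_d}>0$, $i_k\in I\cap V_k$ and $i_j\in J\cap V_j$ for all $j\in\setint[d]\setminus\{k\}$: such a tuple would put the index $i_k$ in $C$ and (since $d\ge 2$) at least one index $i_l$ of the same nonzero entry in $J$, contradicting that $C$ is a connected component. Hence the defining condition of irreducibility fails for this particular $I$, so $\cF$ is not irreducible. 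This proves irreducible $\Rightarrow$ weakly irreducible for every $d\ge 2$.

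For the second assertion, the case $d=2$, the implication irreducible $\Rightarrow$ weakly irreducible is the special case just proved, so it remains to prove the converse. Assume $d=2$ and $G(\cF)$ connected, and fix any $\emptyset\neq I\subsetneqq V$, $J:=V\setminus I$. Connectedness of $G(\cF)$ yields an edge of $E(\cF)$ with one endpoint in $I$ and the other in $J$; since $G(\cF)$ is bipartite with parts $V_1,V_2$, this edge is of the form $(i_1,i_2)\in V_1\times V_2$ with $f_{i_1,i_2}>0$, and exactly one of $i_1,i_2$ lies in $I$. Choosing $k\in\{1,2\}$ to be the index of the endpoint in $I$ then verifies the irreducibility condition for $I$: we get $i_k\in I\cap V_k$ and $i_j\in J\cap V_j$ for the single remaining index $j\in\setint[2]\setminus\{k\}$, with $f_{i_1,i_2}>0$. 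Since $I$ was arbitrary, $\cF$ is irreducible.

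The argument is essentially a matter of matching the quantifiers in the two definitions, so I expect no serious obstacle; the only points requiring a little care are that failure of weak irreducibility must be turned into failure of irreducibility for a \emph{well-chosen} witness set $I$ (a connected component does the job), and, in the converse for $d=2$, that a crossing edge of $G(\cF)$ lands with one endpoint in $I$ and one in $J$, which is precisely what lets us pick $k$ on the $I$-side.
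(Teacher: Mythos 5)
Your proof is correct, and it is somewhat cleaner than the one in the paper. Both arguments proceed by contraposition for the first claim: start from a disconnection of $G(\cF)$, produce a witness set $I$, and show that no tuple $(i_1,\ldots,i_d)$ with $f_{i_1,\ldots,i_d}>0$ can have $i_k\in I$ and $i_j\in J=V\setminus I$ for $j\neq k$. The difference is in the choice of witness. The paper starts from an arbitrary $I$ with no edge to $J$ and replaces it by a perturbed set $I'$ (shrinking each $I_k=V_k$ to $[m_k-1]$) so as to guarantee $J'\cap V_k\neq\emptyset$ for every $k$; it then applies the irreducibility condition to $I'$ and transfers the conclusion back to $I$ by noting that $I_l'=I_l$ for some $l$. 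That transfer step tacitly assumes that one can take $l$ different from the index $k$ produced by the irreducibility condition, which is not automatic in the case where $I_j=V_j$ for every $j\neq k$. Your choice of $I$ as a single connected component $C$ sidesteps all of this: because $C$ is a component, any tuple with $i_k\in C$ and $i_l\in V\setminus C$ (for some $l\neq k$, which exists since $d\ge 2$) and $f_{i_1,\ldots,i_d}>0$ would yield a cross-edge $(i_k,i_l)\in E(\cF)$, a contradiction, regardless of whether some $J\cap V_j$ is empty. So you never need to normalize $I$. The second assertion (for $d=2$) is handled the same way in both versions; the paper leaves it as ``straightforward'' and you supply the short bipartite-crossing-edge argument, which is fine.
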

 \proof
 Assume to the contrary that $\cF$ is not weakly irreducible.
 So the graph $G(\cF)$ is not connected.
 So there exists $\emptyset \ne I\subsetneqq V$
 such that there is no edge from $I$ to $J=V\backslash I$.
 Let $I_k:=I\cap V_k$ for $k\in\setint[d]$.
Let $I_k'$ be
 defined as follows. $I_k'=I_k$ if $I_k\ne V_k$ and
 $I_k'=\setint[m_k-1]$
 if $I_k=V_k$.
 Note that $I_l'=I_l$ for some $l$.  Let $I'=\cup_{k=1}^d I_k',
 J'=V\backslash I'$.  Since $\cF$ is irreducible there
 exists $i_k\in I'\cap V_k$ and $i_j\in J'\cap V_j$ for $j\ne
 k$ such that $f_{i_1,\ldots,i_d}>0$.  Since $I_l'=I_l$ it
 follows that $i_l\in J$.  So $(i_k,i_l)\in E(\cF)$,
 which contradicts our assumption.

 Assume that $d=2$.  It is straightforward to show that
 if $\cF$ weakly irreducible then $\cF$ is irreducible.
 \qed

 We identify $\rC^{o}:=\R_{>0}^{m_1}\times
 \dots \times \R_{>0}^{m_d}$, the interior of the cone $\rC=\R_+^{m_1+\ldots+m_d}$,
with $\R_{>0}^{m_1+\cdots+m_d}$.
 Let
 \begin{equation}\label{definp}
 p:=\max(p_1,\ldots,p_d).
 \end{equation}
 For a nonnegative $d$-form (\ref{defdmulfrm})
 define the following homogeneous map $\bF:\rC\setminus\{\0\}\to\rC\setminus\{\0\}$ of degree one:
 \begin{eqnarray*}\nonumber
\lefteqn{ F((\x_1,\ldots,\x_d))_{i_j,j}= }\\
 &&\!\!\!\!\!\!\!\Bigg (x_{i_j,j}^{p-p_j}\|\x_j\|_{p_j}^{p_j-d}
 \sum\exceptvar{j}
 f_{i_1,\ldots,i_d} x_{i_1,1}\ldots
 x_{i_{j-1},j-1}x_{i_{j+1},j+1}\ldots x_{i_d,d}\Bigg )^{\frac{1}{p-1}},
\label{defFmap}
\\
&& \qquad \qquad\qquad\qquad\qquad\qquad\qquad\qquad\qquad\qquad\qquad\qquad\qquad i_j\in\setint[m_j]\quad
j\in\setint[d].\nonumber
 \end{eqnarray*}
 To avoid trivial cases we assume that $F_{i_j,j}$ is not
 identically zero for each $j$ and $i_j$, i.e.
 \begin{equation}\label{nonvancon}
 \sum\exceptvar{j}
f_{i_1,\ldots,i_d} > 0,  \textrm{ for all } i_j\in\setint[m_j]
\;j\in\setint[d].
 \end{equation}
 Note that if $\cF$ is weakly irreducible, then this condition is
 satisfied.
 Let $\y=(\x_1,\ldots,\x_d)$.
 In what follows we assume the condition
 \begin{equation}\label{pjcond}
 p_j\ge d, \quad j\in\setint[d].
 \end{equation}
 Then $\bF$ is monotone on $\rC^o$.
 Recall the definition of the di-graph
 $$\cG(\bF)=(V,E(\bF)),\quad E(\bF)\subset
 (V_1\times\ldots\times V_d)^2$$
 associated with $\bF$, as in \S2.  Let $\1_j=(1,\ldots,1)\trans,
 \e_{k,j}=(\delta_{k1},\ldots,\delta_{km_j})\trans\in \R^{m_j}$
 for $j\in\setint[d]$.
 Then for $i_k\in V_k, i_l\in V_l$:
 $$(i_k,i_l)\in E(\bF) \iff \lim_{t\to\infty}
 F_{i_k,k}((\1_1,\ldots,\1_d)+t(\0,\ldots,\0,\e_{i_l,l},\0,\ldots,\0))=\infty.
 $$
 Equivalently, there is a di-edge from $i_k\in V_k$ to $i_l\in V_l$ if and only if
 the variable
 $x_{i_l,l}$ effectively appears in the expression of $F_{i_k,k}$.
 The following lemma is deduced straightforwardly.
 \begin{lemma}\label{charFgraph}
 Let $f$ be a nonnegative multilinear form given by
 (\ref{defdmulfrm}).  Assume that the conditions
 (\ref{nonvancon}) and (\ref{pjcond}) hold.  Then, $(r,s)$
is a di-edge of the di-graph  $\cG(\bF)=(V,E(\bF))$ if and only if
at least one of the following conditions holds:
 \begin{enumerate}
 \item\label{charFgraph1}
$r=i_k\in V_k, s=i_l\in V_l$, with $k\neq l$, and we can find
$d-2$ indices $i_j\in V_j$, for $1\leq j\leq d$, $j\not\in
\{k,l\}$, such that
 $f_{i_1,i_2,\ldots,i_d}>0$;
 \item\label{charFgraph2}
$r,s$ belong to $V_k$ and $p_k>d$;
 \item\label{charFgraph3}
$r=s$ belongs to $V_k$ and $p>p_k$.
 \end{enumerate}
 In particular, if $\cF$ is weakly irreducible then $\cG(\bF)$ is
 strongly connected.

 \end{lemma}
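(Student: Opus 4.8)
The plan is to unwind the definition of the di-graph $\cG(\bF)$ directly from the explicit formula for $\bF$. Recall that $(r,s)\in E(\bF)$ iff, writing $r=i_k\in V_k$ and $s=i_l\in V_l$, the variable $x_{i_l,l}$ effectively appears in the polynomial expression $F_{i_k,k}((\x_1,\ldots,\x_d))$ — more precisely, iff $\lim_{u\to\infty}F_{i_k,k}(\u_{\{s\}})=\infty$ where $\u_{\{s\}}$ is the vector with entry $u$ in position $s$ and $1$ elsewhere. Since the $\frac{1}{p-1}$-th power is a monotone bijection of $\R_+$ onto itself, this limit is infinite iff the quantity inside the big parentheses,
\[
Q_{i_k,k}(\x):=x_{i_k,k}^{p-p_k}\|\x_k\|_{p_k}^{p_k-d}\sum\exceptvar{k}f_{i_1,\ldots,i_d}x_{i_1,1}\cdots x_{i_{k-1},k-1}x_{i_{k+1},k+1}\cdots x_{i_d,d},
\]
tends to infinity as $u\to\infty$. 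Under~\eqref{nonvancon} each of the three factors is a nonzero monomial-sum with nonnegative coefficients, so $Q_{i_k,k}$ is a nonzero polynomial with nonnegative coefficients; hence $\lim_{u\to\infty}Q_{i_k,k}(\u_{\{s\}})=\infty$ precisely when the variable $x_s$ occurs in at least one monomial of $Q_{i_k,k}$ with a positive coefficient.

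It remains to read off from which factor the variable $x_s=x_{i_l,l}$ can come. First suppose $l\neq k$. Then $x_{i_l,l}$ cannot appear in $x_{i_k,k}^{p-p_k}$ nor in $\|\x_k\|_{p_k}^{p_k-d}$ (both involve only variables indexed by $V_k$), so it must appear in the third factor; by multilinearity in the $l$-th block, that factor contains the monomial $f_{i_1,\ldots,i_d}x_{i_1,1}\cdots x_{i_d,d}$ (with the $k$-th variable omitted) with positive coefficient for some choice of the remaining $d-2$ indices $i_j\in V_j$, $j\notin\{k,l\}$, iff $f_{i_1,i_2,\ldots,i_d}>0$ for some such choice. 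This is exactly condition~\ref{charFgraph1}. Now suppose $l=k$, so $s=i_l$ and $r=i_k$ both lie in $V_k$. The third factor does not involve any variable from block $k$ at all. The factor $\|\x_k\|_{p_k}^{p_k-d}=\big(\sum_{i\in V_k}x_{i,k}^{p_k}\big)^{(p_k-d)/(p-1)\cdot(p-1)/p_k}$ — more simply, $Q$ contains $\big(\sum_{i\in V_k}x_{i,k}^{p_k}\big)^{(p_k-d)/p_k}$ as a factor after the overall power is accounted for — involves $x_{i_l,k}$ with positive coefficient for every $i_l\in V_k$ iff its exponent $p_k-d$ is strictly positive, i.e. iff $p_k>d$; when $r\neq s$ this is the only way $x_s$ can enter, giving condition~\ref{charFgraph2}, and when $r=s$ it is one of two ways. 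Finally the factor $x_{i_k,k}^{p-p_k}$ involves $x_{i_l,k}$ only when $i_l=i_k$, i.e. $r=s$, and then with positive coefficient iff $p-p_k>0$, i.e. $p>p_k$; this is condition~\ref{charFgraph3}. Combining the three cases, $(r,s)\in E(\bF)$ iff at least one of \ref{charFgraph1}--\ref{charFgraph3} holds.

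For the last assertion, assume $\cF$ is weakly irreducible, i.e. the undirected $d$-partite graph $G(\cF)$ is connected. Given any two vertices $r,s\in V$, pick an undirected path $r=v_0,v_1,\ldots,v_t=s$ in $G(\cF)$; each edge $(v_{i-1},v_i)$ of $G(\cF)$ joins vertices in two distinct parts and, by definition of $G(\cF)$, corresponds to $f_{i_1,\ldots,i_d}>0$ for a suitable completion of the indices, so by case~\ref{charFgraph1} both $(v_{i-1},v_i)$ and $(v_i,v_{i-1})$ are di-edges of $\cG(\bF)$. Hence $\cG(\bF)$ contains a directed path from $r$ to $s$, and since $r,s$ were arbitrary, $\cG(\bF)$ is strongly connected. \qed

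The argument is essentially bookkeeping; the only point requiring a little care — and the one I would treat as the ``main obstacle'' — is the case $l=k$, where one must correctly separate the contribution of the diagonal monomial $x_{i_k,k}^{p-p_k}$ (responsible for the loop $r=s$ under $p>p_k$) from that of the norm factor $\|\x_k\|_{p_k}^{p_k-d}$ (responsible for all within-block edges under $p_k>d$), and observe that the exponents being \emph{strictly} positive is what makes the corresponding variable genuinely appear. The conditions~\eqref{nonvancon} and~\eqref{pjcond} are used exactly to guarantee, respectively, that $Q_{i_k,k}$ is not the zero polynomial (so ``effective appearance'' is equivalent to ``positive coefficient of some monomial'') and that all exponents $p-p_j$, $p_j-d$ are $\ge 0$ (so $\bF$ is well-defined and monotone and $Q_{i_k,k}$ has nonnegative coefficients).
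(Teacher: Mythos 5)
Your proof is correct and is exactly the ``straightforward deduction'' the paper alludes to (the paper gives no explicit proof). You correctly reduce the question of whether $(r,s)\in E(\bF)$ to whether one of the three factors inside the big parentheses of $F_{i_k,k}$ blows up as $x_s\to\infty$ with the other variables set to $1$, and the case split $l\neq k$ versus $l=k$ (with the sub-split $r\neq s$ versus $r=s$) yields precisely conditions~\ref{charFgraph1}--\ref{charFgraph3}. The final paragraph is also right: condition~\ref{charFgraph1} is symmetric in $r$ and $s$, so every undirected edge of $G(\cF)$ gives a di-edge in both directions, and connectedness of $G(\cF)$ transfers to strong connectedness of $\cG(\bF)$.

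One small imprecision worth flagging: $Q_{i_k,k}$ is not in general a polynomial, since $x_{i_k,k}^{p-p_k}$ and $\|\x_k\|_{p_k}^{p_k-d}$ involve real (possibly non-integer) exponents; your phrase ``nonzero polynomial with nonnegative coefficients'' is therefore not literally correct. This does not affect the argument, because what you actually use is that $Q_{i_k,k}(\u_{\{s\}})$ is a product of three nonnegative, nondecreasing functions of $u$, each bounded below by a positive constant thanks to~\eqref{nonvancon} and~\eqref{pjcond}, so the product tends to $\infty$ if and only if at least one of the factors does. It would be cleaner to state that monotonicity-plus-positivity fact directly rather than invoke ``monomials with positive coefficients.''
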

 \begin{theo}\label{theorem1}  Let $f$ be a nonnegative multilinear form given by
 (\ref{defdmulfrm}).  Assume that the conditions
 (\ref{nonvancon}) and (\ref{pjcond}) hold.  Let $p=\max(p_1,\ldots,p_d)$.  Suppose
 furthermore that $\cG(\bF)$ is strongly connected.  Then $\bF$ has
 a unique positive eigenvector
 up to a positive multiple.
 I.e.\ there exist positive vectors $\0<\xib_j\in
 \R^{m_j} ,j\in\setint[d]$
and a positive eigenvalue $\mu$
 with the following properties.
 \begin{enumerate}
 \item\label{theorem1a}  $\bF((\xib_1,\ldots,\xib_d))=\mu
 (\xib_1,\ldots,\xib_d)$.  In particular,
 \begin{equation}\label{theorem1c}
 f(\xib_1,\ldots,\xib_d)=\mu^{p-1}\|\xib_j\|_{p_j}^{d} \textrm{ for }
 j\in\setint[d].
 \end{equation}
 \item\label{theorem1b}  Assume that
 $\bF((\x_1,\ldots,\x_d))=\alpha (\x_1,\ldots,\x_d)$ for some
 $(\x_1,\ldots,\x_d)>\0$.  Then $\alpha=\mu$ and
 $(\x_1,\ldots,\x_d)=t(\xib_1,\ldots,\xib_d)$ for some $t>0$.

 \end{enumerate}
 Let
 $p_1=\ldots=p_d=p\ge d$ and
 assume furthermore that $\cF$ is irreducible.  Then
 the conditions \ref{theorem1a}-\ref{theorem1b} hold.
 Suppose that $\bF((\x_1,\ldots,\x_d))=\alpha (\x_1,\ldots,\x_d)$ for some
 $(\x_1,\ldots,\x_d)\gvertneqq\0$ and $\|\x_j\|_p>0$ for
 $j\in\setint[d]$.
 Then $\alpha=\mu$ and
 $(\x_1,\ldots,\x_d)=t(\xib_1,\ldots,\xib_d)$ for some $t>0$.
 \end{theo}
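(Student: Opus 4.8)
The plan is to reduce everything to the three theorems of~\S2. For the first assertion I would argue as follows. \emph{Existence:} $\bF$ is homogeneous of degree one, monotone on $\rC^o=\R_{>0}^{m_1+\cdots+m_d}$ by~\eqref{pjcond}, maps $\rC^o$ into itself by~\eqref{nonvancon}, and has, by hypothesis, a strongly connected digraph $\cG(\bF)$; so Theorem~\ref{thm2gg04} yields $(\xib_1,\ldots,\xib_d)>\0$ and $\mu>0$ with $\bF((\xib_1,\ldots,\xib_d))=\mu(\xib_1,\ldots,\xib_d)$. To get~\eqref{theorem1c} I would raise the $(i_j,j)$-th scalar equation to the power $p-1$, cancel $\xi_{i_j,j}^{p-p_j}>0$ to obtain $\|\xib_j\|_{p_j}^{p_j-d}\sum\exceptvar{j}f_{i_1,\ldots,i_d}\prod_{l\ne j}\xi_{i_l,l}=\mu^{p-1}\xi_{i_j,j}^{p_j-1}$, then multiply by $\xi_{i_j,j}$ and sum over $i_j\in[m_j]$; the left side collapses to $\|\xib_j\|_{p_j}^{p_j-d}f(\xib_1,\ldots,\xib_d)$ and the right side to $\mu^{p-1}\|\xib_j\|_{p_j}^{p_j}$, and dividing by $\|\xib_j\|_{p_j}^{p_j-d}$ gives the identity. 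This is the routine computation.

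For uniqueness (item~\ref{theorem1b}) I would invoke Theorem~\ref{nusth2.5} at $\u=(\xib_1,\ldots,\xib_d)$. One checks that $\bF$ is $C^1$ (indeed smooth) near any positive point, since each $F_{i_j,j}=\big(x_{i_j,j}^{p-p_j}\|\x_j\|_{p_j}^{p_j-d}S_{i_j,j}\big)^{1/(p-1)}$ with $S_{i_j,j}(\y):=\sum\exceptvar{j}f_{i_1,\ldots,i_d}\prod_{l\ne j}x_{i_l,l}$, and at a positive point all three factors are positive (for $S_{i_j,j}$ this uses~\eqref{nonvancon}) while all exponents are nonnegative ($p\ge p_j$, $p_j\ge d$, $p\ge d\ge2$). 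The key point is that the zero pattern of the nonnegative matrix $A:=\rD\bF(\u)$ equals the edge set $E(\bF)$ of $\cG(\bF)$: this is a direct chain-rule computation reproducing the three alternatives of Lemma~\ref{charFgraph}. Since $\cG(\bF)$ is strongly connected and $n=m_1+\cdots+m_d\ge4$, $A$ is an irreducible nonnegative matrix, so $\rho(A)>0$ is a simple root of its characteristic polynomial; Theorem~\ref{nusth2.5} then says $\u$ is, up to a positive scalar, the only eigenvector of $\bF$ in $\rC^o$. If $\bF((\x_1,\ldots,\x_d))=\alpha(\x_1,\ldots,\x_d)$ with $(\x_1,\ldots,\x_d)>\0$, then $(\x_1,\ldots,\x_d)=t\u$ and, applying $\bF$ and using homogeneity, $\alpha=\mu$. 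This proves items~\ref{theorem1a}--\ref{theorem1b}.

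For the last paragraph of the statement, assume $p_1=\cdots=p_d=p\ge d$ and $\cF$ irreducible. By Lemma~\ref{indecirred} $\cF$ is weakly irreducible, so~\eqref{nonvancon} holds and, by Lemma~\ref{charFgraph}, $\cG(\bF)$ is strongly connected; hence the part already proved applies and gives items~\ref{theorem1a}--\ref{theorem1b}. Here $F_{i_j,j}(\y)=\big(\|\x_j\|_p^{p-d}S_{i_j,j}(\y)\big)^{1/(p-1)}$, which is defined and continuous whenever $\|\x_j\|_p>0$ for all $j$. Let $\y=(\x_1,\ldots,\x_d)\gvertneqq\0$ with $\|\x_j\|_p>0$ for all $j$ and $\bF(\y)=\alpha\y$; I claim $\y>\0$. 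If not, set $J:=V\setminus\bigcup_j\supp\x_j$, which is nonempty, and proper since each $\supp\x_j$ is nonempty. Applying the irreducibility of $\cF$ to $J$ gives $k\in[d]$, $i_k^*\in J\cap V_k$ and $i_j^*\in V_j\setminus J=\supp\x_j$ for $j\ne k$ with $f_{i_1^*,\ldots,i_d^*}>0$. Since $\|\x_k\|_p>0$ and $x_{i_l^*,l}>0$ for every $l\ne k$, one has $F_{i_k^*,k}(\y)\ge\big(\|\x_k\|_p^{p-d}f_{i_1^*,\ldots,i_d^*}\prod_{l\ne k}x_{i_l^*,l}\big)^{1/(p-1)}>0$, whereas $\bF(\y)_{i_k^*,k}=\alpha x_{i_k^*,k}=0$ because $i_k^*\in J$ forces $x_{i_k^*,k}=0$. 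This contradiction shows $\y>\0$, and then item~\ref{theorem1b} gives $\alpha=\mu$ and $\y=t(\xib_1,\ldots,\xib_d)$ for some $t>0$.

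The main obstacle is the identification, in the uniqueness step, of the combinatorial digraph $\cG(\bF)$ driving the existence result with the zero pattern of the Jacobian $\rD\bF(\u)$ driving Nussbaum's uniqueness criterion; only once these coincide does the single hypothesis ``$\cG(\bF)$ strongly connected'' supply both the existence input of Theorem~\ref{thm2gg04} and the simplicity of the Perron root required by Theorem~\ref{nusth2.5}. The rest is bookkeeping, together with the short support-versus-complement argument of the last paragraph, where it is essential to use the full irreducibility of $\cF$ and not merely weak irreducibility.
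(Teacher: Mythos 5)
Your proposal is correct and follows essentially the same route as the paper: existence from Theorem~\ref{thm2gg04}, uniqueness from Theorem~\ref{nusth2.5} after identifying the digraph of the Jacobian $\rD\bF(\u)$ with $\cG(\bF)$, the $(\ref{theorem1c})$-computation by raising to the power $p-1$ and summing, and, for the irreducible case, the same zero/positive-support argument (you present it as a direct contradiction from $F_{i_k^*,k}(\y)>0$, the paper as $f_{i_1,\ldots,i_d}=0$ for the relevant indices; these are the same argument stated contrapositively).
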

 \proof  The existence of a positive eigenvector
 $(\xib_1,\ldots,\xib_d)$ of $\bF$ follows from Theorem \ref{thm2gg04}.
 We next derive the uniqueness of $(\xib_1,\ldots,\xib_d)$
 from Theorem \ref{nusth2.5}.  Clearly each coordinate of $\bF$ is a smooth function on $\rC^o$.
 Let $\bF(\y)=t\y, \y\in\rC^o$.  Let
 $A=\rD \bF(\y)\in\R_+^{(m_1+\cdots+ m_d)\times (m_1+\cdots +m_d)}$.
 The di-graph $G(A)$,
 induced by the nonnegative entries of $A$, is equal to
 $\cG(\bF)$.  The assumption that $\cG(\bF)$ is strongly connected
 yields that $\rD \bF(\y)$ is irreducible.  Hence $\bF$ has a unique
 positive eigenvector $\y=(\x_1,\ldots,\x_d)$,
 up to a product by a positive scalar.

 It is left to show the condition (\ref{theorem1c}) for the
 eigenvector $\y$.
 Raise the equality $F_{i_j,j}(\x_1,\ldots,\x_d)=\mu x_{i_j,j}$
 to the power $p-1$ and divide by $x_{i_j,j}^{p-p_j}$ to obtain
 the equality
 $$\|\x_j\|_{p_j}^{p_j-d}
 \sum\exceptvar{j}
 f_{i_1,\ldots,i_d} x_{i_1,1}\ldots
 x_{i_{j-1},j-1}x_{i_{j+1},j+1}\ldots
 x_{i_d,d}=\mu^{p-1}x_{i_j,j}^{p_j-1}.$$
 Multiply this equality by $x_{i_j,j}$ and sum on
 $i_j=1,\ldots,m_j$ to obtain
 $$\|\x_j\|_{p_j}^{p_j-d}f(\x_1,\ldots,\x_d)=\mu^{p-1}\|\x_j\|_{p_j}^{p_j},
 \;j\in\setint[d],$$
 which is equivalent to (\ref{theorem1c}).
 Assume that $p_1=\ldots=p_d=p$ and $\cF$ is irreducible.  Lemma \ref{indecirred}
 yields that $\cF$ is weakly irreducible.
 Lemma \ref{charFgraph} implies that $\cG(\bF)$ is strongly
 connected. Hence the conditions
 \ref{theorem1a}-\ref{theorem1b} hold.

 Assume that $p_1=\ldots=p_d=p\ge d$, $\bF$ is irreducible and $\bF((\z_1,\ldots,\z_d))=
 \alpha (\z_1,\ldots,\z_d)$ for some
 $(\z_1,\ldots,\z_d)\gneq\0$, where $\|\z_j\|_p>0$ for
 $j\in\setint[d]$.
 Suppose furthermore that $(\z_1,\ldots,\z_d)$
 is not a positive vector.
 Let $\emptyset \ne I,J\subset \cup_{j=1}^d V_i$ be the set of indices where
 $(\z_1,\ldots,\z_d)$ have zero and positive coordinates respectively.
 I.e. $i_k\in I\cap V_k$ if and only if $z_{i_k,k}=0$.
 Since $\|\z_k\|>0$ for each $k\in\setint[d]$
 it follows that
 $I\cap V_k\ne V_k$, and
 for each $i_k\in I\cap V_k$ we have the equality
 $\frac{(F_{i_k}(\z_1,\ldots,\z_d))^{p-1}}{\|\z_j\|_p^{p-d}}=0$.
 Hence $f_{i_1,\ldots,i_d}=0$ for each $i_k\in I\cap V_k$ and $i_j\in J\cap V_j$
 for each $j\in\setint[d]\backslash \{k\}$.
 This contradicts the assumption that $\cF$ is irreducible.
 \qed

 \textbf{Proof of Theorem \ref{maintheo}}  Apply Theorem
 \ref{theorem1}.  Normalize the positive eigenvector
 $(\xib_1,\ldots,\xib_d)$ by the condition $\|\xib_1\|_{p_1}=1$.
 Then the first condition of (\ref{theorem1c}) yields that
 $f(\xib_1,\ldots,\xib_d)=\mu^{p-1}$.  The condition
 (\ref{theorem1c}) for $j>1$ yields that $\|\xib_j\|_{p_j}=1$.
 Hence the equality $\bF((\xib_1,\ldots,\xib_d))=\mu
 (\xib_1,\ldots,\xib_d)$ implies (\ref{critveceq}) with
 $\lambda=\mu^{p-1}$.

 Assume now that (\ref{critveceq}) holds.
 Then
 $\bF((\x_1,\ldots,\x_p))=\lambda^{\frac{1}{p-1}}(\x_1,\ldots,\x_p)$.
 Hence, if $(\x_1,\ldots,\x_p)>\0$ it follows that
 $(\x_1,\ldots,\x_p)=(\xib_1,\ldots,\xib_d)$.

 Assume now $\cF$ is irreducible.
 We claim that
 (\ref{critveceq}) implies that $\lambda>0$ and $\x_j>\0$ for
 $j\in\setint[d]$.
 Assume to the contrary that $(\x_1,\ldots,\x_d)$
 is not a positive vector.
 Let $\emptyset \ne I,J\subset \cup_{j=1}^d V_i$ be the set of indices where
 $(\x_1,\ldots,\x_d)$ have zero and positive coordinates respectively.
 I.e. $i_k\in I\cap V_k$ if and only if $z_{i_k,k}=0$.
 Since $\|\x_k\|_{p_k}>0$ for each $k\in\setint[d]$
it follows that
 $I\cap V_k\ne V_k$, and
 for each $i_k\in I\cap V_k$ we have the equality
 $$\sum\exceptvar{k}
 f_{i_1,\ldots,i_d} x_{i_1,1}\ldots
 x_{i_{k-1},k-1}x_{i_{k+1},k+1}\ldots x_{i_d,d}=0.$$
 Hence $f_{i_1,\ldots,i_d}=0$ for each $i_k\in I\cap V_k$ and $i_j\in J\cap V_j$
 for each $j\in\setint[d]\backslash \{k\}$.
 This contradicts the assumption that $\cF$ is irreducible.
 So $(\x_1,\ldots,\x_d)$ must be
 a positive vector, and so $\lambda>0$.  The previous
 arguments show that the system
 (\ref{critveceq}) has a unique solution $(\x_1,\ldots,\x_d)$,
 which is positive.
 \qed

 \section{Extension: Perron-Frobenius theorem for nonnegative polynomial maps}\label{sec-ext}
 Let $\bP=(P_1,\ldots,P_n)\trans: \R^n\to \R^n$ be a polynomial map.  We assume that each $P_i$ is a 
polynomial of degree $d_i\ge 1$,
 and that the coefficient of each monomial in $P_i$ is nonnegative.  So $\bP:\R_+^n\to \R_+^n$.  We associate with $\bP$ the following di-graph $G(\bP)=(V,E(\bP))$, where $V=[n] $ and $(i,j)\in E(\bP)$ if
the variable $x_j$ effectively appears in the expression of $P_i$,
or if this expression contains a monomial with degree $<d_i$ (note that the latter case may only occur when $P_i$ is not homogeneous).
 We call $\bP$ \emph{weakly irreducible} if $G(\bP)$ is strongly connected.
To each subset $I\subset [n]$, is associated a \emph{part} $Q_I$,
which consists of the vectors $\x=(x_1,\ldots,x_n)\trans\in \R_+^n$ such that $x_i>0$
iff $i\in I$.
We say that the polynomial map $\bP$ is \emph{irreducible} if there is no part of $\R_+^n$
that is invariant by $\bP$, except the trivial parts $Q_{\emptyset}$
and $Q_{[n]}$. Observe that when $\bP$ is the polynomial
map appearing at the left-hand side of~\eqref{critveceq}, this definition
of irreducibility coincides
with the one made in Section~\ref{intro}. Observe also that if $\bP$ is irreducible, then, it is weakly irreducible (the proof is a simplified version
of the proof of Lemma~\ref{indecirred}).
We have the following extension of Theorem~\ref{maintheo}.
 \begin{theo}\label{mtheoa}  Let $\bP=(P_1,\ldots,P_n)\trans: \R^n\to \R^n$ be a polynomial map, where each $P_i$ is a 
 polynomial of degree $d_i\ge 1$
 with nonnegative coefficients.  Let $\delta_1,\ldots,\delta_n\in(0,\infty)$ be given and assume that $\delta_i\ge d_i, i\in\setint[n]$.
 Consider the system
 \begin{equation}\label{Psystem}
 P_i(\x)=\lambda x_i^{\delta_i}, \quad i\in\setint[n], \quad \x\ge \0.
 \end{equation}
Assume that $\bP$ is weakly irreducible.
Then for each $a,p>0$ there exists a unique positive vector $\x>\0$, depending on $a,p$, satisfying (\ref{Psystem}) and the condition $\|\x\|_p=a$.
 Suppose furthermore that
$\bP$ is irreducible.  Then the system (\ref{Psystem}) has a unique solution, depending on $a,p$  satisfying $\|\x\|_p=a$, and all the coordinates of this solution are positive.
 \end{theo}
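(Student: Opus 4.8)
The plan is to reduce Theorem~\ref{mtheoa} to the results of~\S2, exactly as in the proof of Theorem~\ref{theorem1}: one builds a homogeneous, monotone, degree-one self-map $\bF$ of $\R_{>0}^n$ whose positive eigenvectors, after rescaling to $\|\x\|_p=a$, are precisely the positive solutions of~\eqref{Psystem} with $\|\x\|_p=a$, and then invokes Theorems~\ref{thm2gg04} and~\ref{nusth2.5}. The one genuinely new point compared with Theorem~\ref{maintheo} is that $\bP$ need not be homogeneous; this is handled by homogenizing the lower-degree parts of each $P_i$ with powers of $\|\x\|_p/a$, which changes nothing on the sphere $\|\x\|_p=a$.

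Write $P_i=\sum_{k=0}^{d_i}P_{i,k}$ with $P_{i,k}$ the degree-$k$ homogeneous part of $P_i$ (nonnegative coefficients), set $\delta:=\max_{i\in[n]}\delta_i$, and for $\x\in\R_{>0}^n$ define
\[
\tilde P_i(\x):=\sum_{k=0}^{d_i}\Bigl(\tfrac{\|\x\|_p}{a}\Bigr)^{\delta_i-k}P_{i,k}(\x),\qquad F_i(\x):=\bigl(x_i^{\,\delta-\delta_i}\,\tilde P_i(\x)\bigr)^{1/\delta}.
\]
First I would verify that $\bF=(F_1,\ldots,F_n)\trans$ maps $\R_{>0}^n$ into itself (each $P_{i,d_i}$ is a nonzero polynomial with nonnegative coefficients, hence positive on $\R_{>0}^n$) and is homogeneous of degree one (each summand of $x_i^{\delta-\delta_i}\tilde P_i(\x)$ is homogeneous of degree $\delta$). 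Monotonicity of $\bF$ on $\R_{>0}^n$ is where the hypotheses enter: $\delta_i\ge d_i\ge k$ makes each exponent $\delta_i-k$ nonnegative so $(\|\x\|_p/a)^{\delta_i-k}$ is nondecreasing, $\delta\ge\delta_i$ makes $x_i^{\delta-\delta_i}$ nondecreasing, and $t\mapsto t^{1/\delta}$ is nondecreasing; since sums and products of nonnegative nondecreasing functions are nondecreasing, $\bF$ is monotone.

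Next I would record the correspondence. If $\bF(\x)=\mu\x$ with $\x>\0$ then $\mu>0$ and $\tilde P_i(\x)=\mu^{\delta}x_i^{\delta_i}$ for all $i$; replacing $\x$ by $\x'=(a/\|\x\|_p)\x$ (again an eigenvector for the same $\mu$, by homogeneity) makes every factor $(\|\x'\|_p/a)^{\delta_i-k}$ equal to $1$, so $\tilde P_i(\x')=P_i(\x')$ and $\x'$ solves~\eqref{Psystem} with $\lambda=\mu^{\delta}$ and $\|\x'\|_p=a$; conversely any $\x>\0$ solving~\eqref{Psystem} with $\|\x\|_p=a$ satisfies $\tilde P_i(\x)=P_i(\x)=\lambda x_i^{\delta_i}$, i.e.\ $\bF(\x)=\lambda^{1/\delta}\x$. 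Since $\bP$ is inhomogeneous the solutions of~\eqref{Psystem} do not form a ray, so the constraint $\|\x\|_p=a$ meets each eigenray of $\bF$ in exactly one point and the two normalized sets are in bijection. I would then check $\cG(\bF)\supseteq G(\bP)$: if a variable $x_j$ effectively appears in $P_i$ then $F_i(\u_{\{j\}})\to\infty$ as $u\to\infty$, and if $P_i$ carries a monomial of degree $k<d_i\le\delta_i$ then $(\|\u_{\{j\}}\|_p/a)^{\delta_i-k}\to\infty$ forces $F_i(\u_{\{j\}})\to\infty$ for every $j$. Hence, when $\bP$ is weakly irreducible, $\cG(\bF)$ is strongly connected; Theorem~\ref{thm2gg04} supplies a positive eigenvector $\u$, $\bF$ is smooth on $\R_{>0}^n$, the di-graph of the nonnegative matrix $\rD\bF(\u)$ coincides with $\cG(\bF)$ (as in the proof of Theorem~\ref{theorem1}), hence $\rD\bF(\u)$ is irreducible and its spectral radius is positive and a simple eigenvalue, and Theorem~\ref{nusth2.5} gives that $\u$ is the unique positive eigenvector up to a positive scalar. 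Through the correspondence this is exactly the asserted existence and uniqueness of a positive solution of~\eqref{Psystem} with $\|\x\|_p=a$.

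For the irreducible case it remains to show that every solution $\x\ge\0$ of~\eqref{Psystem} with $\|\x\|_p=a>0$ is positive. Let $I=\{i:x_i>0\}$; it is nonempty since $a>0$. If $I\subsetneq[n]$ then, for $i\notin I$, $x_i^{\delta_i}=0$ forces $P_i(\x)=0$, and since $P_i$ has nonnegative coefficients while $\x$ is strictly positive on $I$, no monomial of $P_i$ can be supported inside $I$; hence $\bP$ maps the part $Q_I$ into the face $\{\x\in\R_+^n: x_i=0\ \text{for}\ i\notin I\}$, so $Q_I$ is a nontrivial invariant part, contradicting the irreducibility of $\bP$. Therefore $\x>\0$, and since an irreducible $\bP$ is weakly irreducible (the argument being a simplification of Lemma~\ref{indecirred}), uniqueness follows from the weakly irreducible case. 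The main obstacle I expect is setting up $\bF$ correctly --- making it simultaneously homogeneous of degree one, monotone on $\R_{>0}^n$ (which is precisely where $\delta_i\ge d_i$ is used), and such that its normalized positive eigenvectors solve the \emph{inhomogeneous} system~\eqref{Psystem}; once that construction is in place, the rest is a routine transcription of the proof of Theorem~\ref{theorem1}.
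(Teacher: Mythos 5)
Your proof is correct and follows essentially the same route as the paper's: you construct the same homogenizing map $\bF$ (just grouping monomials by degree $k$ rather than by multi-index $\j$, which expands to the identical formula), verify it is homogeneous, monotone and positive, identify $\cG(\bF)$ with $G(\bP)$ up to loops, and invoke Theorems~\ref{thm2gg04} and~\ref{nusth2.5} for existence and uniqueness, with the same invariant-part argument for the irreducible case.
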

 \proof  We can write
every coordinate of $P$ as a sum
\[
P_i(\x)=\sum_{\j\in \Z_+^n}a_{i\j} \x^\j \enspace,
\]
where $\Z_+$ is the set of nonnegative integers, $\j=(j_1,\ldots,j_n)$ is a multi-index, $\x^\j:=x_1^{j_1}\cdots x_n^{j_n}$,
$a_{i\j}\geq 0$, and $a_{i\j}=0$ except for a finite number of values of $\j$.
We set $|\j|:=j_1+\dots+j_n$.

Let $\delta=\max(\delta_1,\ldots,\delta_n)$,
and consider the following homogeneous monotone map $\bF=(F_1,\ldots,F_n)\trans:\R_+^n \to \R^n_+$ given by
 \begin{equation}\label{defFS5}
 F_i(\x)=\Big(\sum_{\j\in\Z_+^n}
a_{i\j}x_i^{\delta-\delta_i}\big(\frac{\|\x\|_p}{a}\big)^{\delta_i-|\j|}\x^\j
\Big)^{\frac{1}{\delta}}, \quad i\in\setint[n].
 \end{equation}
Observe that the di-graph of $\bF$, in the sense of Theorem~\ref{thm2gg04},
coincides with the di-graph of $\bP$, except perhaps for loops
(di-edges $i\to i$). Indeed, for every variable $x_k$ effectively appearing
in the expansion of $\bP_i$, there is a di-edge from $i$ to $k$ in the di-graph of $\bP$. Moreover, if there is one monomial $a_\j\x^\j$ in $P_i$ of degree $|\j|<\delta_i$, the presence of the term $\|\x\|_p$ in the construction of $\bF$ yields a di-edge between $i$ and every $k\in[n]$. Finally, if $\delta>\delta_i$, there is a di-edge from $i$ to $i$ in the same di-graph.

Thus, if $\bP$ is weakly irreducible, the di-graph
of $\bF$ is strongly connected, and so, there exists a vector $\x>0$
and a scalar $\mu>0$ such that $F(\x)=\mu \x$. Since $F$ is positively homogeneous, we may normalize $\x$ so that $\|\x\|_p=a$. Then, we readily deduce
from~(\ref{defFS5}) that
\[
\sum_{\j\in\Z_+^n}a_{i\j}x_i^{\delta-\delta_i}\x^\j=\mu^\delta x_i^\delta \enspace.
\]
 Since we assumed that $\x>0$, after multiplying by $x_i^{\delta_i-\delta}$ both
sides, we arrive at
$P_i(\x)=\mu^\delta x_i^{\delta_i}$,
with $\lambda:=\mu^\delta$,
showing that $\x$ satisfies~(\ref{Psystem}).
Conversely, any solution $\x$ of~\eqref{Psystem}
such that $\|\x\|_p=a$ is also an eigenvector of $\bF$. Consider any such solution
$\x>0$, and let $A:=D\bF(\x)$. The di-graph of the matrix $A$ coincides
with the di-graph of $\bF$, which, as noted above, contains the di-graph of $\bP$. Then, it
follows from Theorem~2.2 that $\bF$ has a unique positive eigenvector,
up to a multiplicative constant, and so there is a unique solution $\x>0$
of~\eqref{Psystem}.

Assume now that $\x\gneq 0$ is a solution of~\eqref{Psystem}
such that $\|\x\|_p=a$,
and let $I:=\{i\in[n]\mid x_i\neq 0\}$. Then, it follows from~\eqref{Psystem}
that the part $Q_I$ is invariant by $\bP$. Hence, if $\bP$ is irreducible,
$I=[n]$. Then, the uniqueness of $\x$ follows from the first part of the proof.
%
\qed

We now turn our attention to the case where all the entries of $P$ are homogeneous. Then, we will derive from a general result of~\cite{nuss86} a minimax characterization
of the eigenvalue of $P$ in~\eqref{Psystem}, similar to the classical Collatz-Wielandt formula in
Perron-Frobenius theory.

If $\bF$ is homogeneous monotone map of $\R_+^n$ to itself,
the \emph{cone spectral radius} of $\bF$, denoted by
$\rho(\bF)$, is
the greatest scalar $\mu$ such that there exists a nonzero
vector $\u\in \R_+^n$ such that $\bF(\u)=\mu \u$. We shall refer
to $\mu$ and $\u$ as a nonlinear \emph{eigenvalue} and \emph{eigenvector} of $\bF$, respectively.
We shall also use the following generalizations of
the Collatz-Wielandt functions arising classically in Perron-Frobenius
theory:
\begin{eqnarray*}
\cw(\bF)
&=& \inf\{\mu \mid \exists \u\in \interior\R_+^n,\; \bF(\u)\leq \mu \u\} \enspace ,\\
\cwl(\bF)&=& \sup\{\mu \mid \exists \u\in \R_+^n\setminus\{0\},\; \bF(\u)\geq \mu \u\}  \enspace .
\end{eqnarray*}
Nussbaum proved in~\cite[Theorem~3.1]{nuss86} that
\begin{equation}
\rho(\bF)=\cw(\bF)\label{e-nussbaum}
\enspace.
\end{equation}
>From this,
one can deduce that
\begin{equation}
\cwl(\bF)=\cw(\bF) \enspace ,\label{coro-nuss}
\end{equation}
see~\cite[Lemma~2.8]{AGG09}.
Then, we obtain the following Collatz-Wielandt type
property for nonnegative polynomial maps.
\begin{corol}\label{coro-cw}
Let $\bP=(P_1,\ldots,P_n)\trans: \R^n\to \R^n$ be a polynomial map, where each $P_i$ a homogeneous polynomial of degree $d\ge 1$
 with nonnegative coefficients.
Assume that $\bP$ is weakly irreducible. Then, the unique scalar $\lambda$
such that there is a positive vector $\u$ with $P_i(\u)=\lambda u_i^{d}$
for all $i\in\setint[n]$ satisfies:
\begin{eqnarray}
\lambda = \inf_{\x\in \interior\R_+^n} \max_{i\in [n]}
\frac{P_i(\x)}{x_i^d}
=\sup_{\x\in\R_+^n\setminus\{0\}} \min_{\scriptstyle i\in [n]\atop \scriptstyle x_i \neq 0}
\frac{P_i(\x)}{x_i^d}\enspace .
\label{e-cw-functionprime}
\end{eqnarray}
\end{corol}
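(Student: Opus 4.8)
The plan is to derive the statement from the Collatz--Wielandt identities~\eqref{e-nussbaum} and~\eqref{coro-nuss}, applied to the homogeneous monotone map obtained from $\bP$ by extracting $d$-th roots.

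First I would introduce $\bF=(F_1,\dots,F_n)\trans:\R_+^n\to\R_+^n$, $F_i(\x):=P_i(\x)^{1/d}$. Since each $P_i$ is homogeneous of degree $d$ with nonnegative coefficients, $\bF$ is continuous, monotone, and homogeneous of degree one, and it coincides with the map~\eqref{defFS5} in the special case $d_1=\dots=d_n=\delta_1=\dots=\delta_n=d$ (then $\delta=d$, every monomial of $P_i$ has degree $\delta_i$, and both the $x_i^{\delta-\delta_i}$ and the $(\|\x\|_p/a)^{\delta_i-|\j|}$ factors are trivial). In this specialization the di-graph of $\bF$, in the sense of Theorem~\ref{thm2gg04}, coincides with $G(\bP)$, which is strongly connected because $\bP$ is weakly irreducible; hence Theorem~\ref{mtheoa}, applied with any fixed $a,p>0$, shows that there is a unique scalar $\lambda$ (necessarily $\ge0$) admitting a positive vector $\u>\0$ with $P_i(\u)=\lambda u_i^d$ for all $i$, and this is the scalar in the statement. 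Equivalently, $\bF(\u)=\lambda^{1/d}\u$.

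Next I would rewrite the two Collatz--Wielandt functions of $\bF$ in terms of the ratios $P_i(\x)/x_i^d$. For $\x\in\interior\R_+^n$ and $\mu\ge0$ one has $\bF(\x)\le\mu\x$ iff $\max_{i\in[n]}P_i(\x)/x_i^d\le\mu^d$, while negative $\mu$ cannot occur here since $\bF\ge\0$; as $t\mapsto t^{1/d}$ is continuous and strictly increasing on $[0,\infty)$, this gives
\[
\cw(\bF)=\Bigl(\inf_{\x\in\interior\R_+^n}\ \max_{i\in[n]}\frac{P_i(\x)}{x_i^d}\Bigr)^{1/d}\enspace .
\]
Similarly, for $\x\in\R_+^n\setminus\{\0\}$ and $\mu\ge0$ one has $\bF(\x)\ge\mu\x$ iff $P_i(\x)\ge\mu^d x_i^d$ for every index $i$ with $x_i\ne0$ (indices with $x_i=0$ impose no constraint), and negative $\mu$ do not affect the supremum, so
\[
\cwl(\bF)=\Bigl(\sup_{\x\in\R_+^n\setminus\{\0\}}\ \min_{\scriptstyle i\in[n]\atop\scriptstyle x_i\ne0}\frac{P_i(\x)}{x_i^d}\Bigr)^{1/d}\enspace .
\]

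Finally I would pin down the common value. Substituting $\x=\u$ in the last two displays and using $P_i(\u)/u_i^d=\lambda$ for all $i$, I obtain $\cw(\bF)\le\lambda^{1/d}$ and $\cwl(\bF)\ge\lambda^{1/d}$; combined with $\cw(\bF)=\cwl(\bF)$ from~\eqref{coro-nuss}, this forces $\cw(\bF)=\cwl(\bF)=\lambda^{1/d}$. Raising to the $d$-th power and substituting back into the two displays yields~\eqref{e-cw-functionprime}. The argument is essentially bookkeeping once $\bF$ is in place; the only points that need a little care are the verification that, in the homogeneous case, the di-graph of $\bF$ is exactly $G(\bP)$ (which is what lets Theorems~\ref{thm2gg04} and~\ref{mtheoa} apply) and the correct handling of the vanishing coordinates of $\x$ when rewriting $\cwl(\bF)$.
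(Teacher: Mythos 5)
Your proof is correct and follows essentially the same route as the paper: define $\bF=\bP^{1/d}$ coordinatewise, invoke Theorem~\ref{mtheoa} for existence, rewrite $\cw(\bF)$ and $\cwl(\bF)$ in terms of the ratios $P_i(\x)/x_i^d$, and close the argument using Nussbaum's identity~\eqref{e-nussbaum} via its consequence~\eqref{coro-nuss}. The only cosmetic difference is that the paper sandwiches $\mu$ between $\rho(\bF)$ and $\cw(\bF)$ and then applies $\rho(\bF)=\cw(\bF)$ directly, whereas you sandwich it between $\cwl(\bF)$ and $\cw(\bF)$ and apply $\cwl(\bF)=\cw(\bF)$; both are the same reasoning in slightly different clothing.
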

\begin{proof}
Define the map $\bF$ as in the proof of Theorem~4.1, so that
\begin{equation}
F_i(\x)= (P_i(\x))^{1/d} \enspace .
\label{eq-1overd}
\end{equation}
By Theorem~\ref{mtheoa}, $\bF$ has an eigenvector in the interior of
$\R_+^n$. Let $\mu$ be the associated eigenvalue. By definition
of $\rho(\bF)$ and $\cw(\bF)$, we have $\rho(\bF)\geq \mu\geq \cw(\bF)$.
From~\eqref{e-nussbaum}, we deduce that $\mu=\cw(\bF)$. Observe that
\begin{eqnarray}
\cw(\bF)= \inf_{\x\in\interior\R_+^n}\max_{i\in\setint[n]}\frac{F_i(\x)}{x_i}
=  \inf_{\x\in\interior\R_+^n}\max_{i\in\setint[n]}\frac{P_i(\x)^{\frac{1}{d}}}{x_i}
\enspace .
\label{e-cwforp}
\end{eqnarray}
It follows that $\lambda=\mu^d$ is given by the first expression in~\eqref{e-cw-functionprime}.
A similar reasoning,
this time with the lower Collatz-Wielandt type number $\cwl(\bF)$,
leads to the second expression in~\eqref{e-cw-functionprime}.
\end{proof}
As an immediate consequence, we get the following analogue
of the characterization of the Perron root of an irreducible
nonnegative matrix as the spectral radius.
\begin{corol}
Let $\bP$, $d$ and $\lambda$ be as in Corollary~\ref{coro-cw}.
If $\nu\in\C$ and $\v=(v_1,\ldots,v_n)\trans\in \C^n\setminus\{0\}$ are such that $P_i(\v)=\nu v_i^d$,
for all $i\in\setint[n]$, then $|\nu|\leq \lambda$.
\end{corol}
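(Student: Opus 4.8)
The plan is to reduce the complex eigenvalue bound to the lower Collatz--Wielandt characterization established in the proof of Corollary~\ref{coro-cw}, exploiting the nonnegativity of the coefficients of the $P_i$ to compare a complex eigenvector with its modulus. First I would set $w_i:=|v_i|$ for $i\in\setint[n]$, so that $\w:=(w_1,\ldots,w_n)\trans\in\R_+^n\setminus\{0\}$, and expand $P_i(\x)=\sum_{\j}a_{i\j}\x^\j$ with $a_{i\j}\ge 0$. Since $|P_i(\v)|=|\nu|\,|v_i|^d=|\nu|\,w_i^d$, the triangle inequality gives, for each $i\in\setint[n]$,
\[
|\nu|\,w_i^d=\Big|\sum_{\j}a_{i\j}\v^\j\Big|\le\sum_{\j}a_{i\j}w_1^{j_1}\cdots w_n^{j_n}=P_i(\w)\enspace,
\]
so that $P_i(\w)\ge|\nu|\,w_i^d$ for every $i$ (the inequality being trivially true when $w_i=0$, since $P_i(\w)\ge 0$).

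Next I would reintroduce the homogeneous monotone map $\bF=(F_1,\ldots,F_n)\trans:\R_+^n\to\R_+^n$ used in the proof of Corollary~\ref{coro-cw}, namely $F_i(\x)=P_i(\x)^{1/d}$, which is well defined since $P_i\ge 0$ on $\R_+^n$, is homogeneous of degree one, and is monotone because all coefficients of $P_i$ are nonnegative. Taking $d$-th roots in the displayed inequality yields $F_i(\w)\ge|\nu|^{1/d}w_i$ for all $i$, i.e.\ $\bF(\w)\ge|\nu|^{1/d}\w$ with $\w\gneq 0$. By the very definition of the lower Collatz--Wielandt number $\cwl$, this shows $\cwl(\bF)\ge|\nu|^{1/d}$.

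Finally I would invoke the identities already recorded in the paper: by~\eqref{coro-nuss} we have $\cwl(\bF)=\cw(\bF)$, and the proof of Corollary~\ref{coro-cw} shows that $\cw(\bF)$ equals the eigenvalue $\mu$ of the interior eigenvector of $\bF$, with $\lambda=\mu^d$. Combining these, $|\nu|^{1/d}\le\cwl(\bF)=\cw(\bF)=\mu=\lambda^{1/d}$, and raising to the $d$-th power gives $|\nu|\le\lambda$, as claimed. I do not expect a genuine obstacle here: the entire content sits in Nussbaum's theorem~\eqref{e-nussbaum} and its consequence~\eqref{coro-nuss}, which are cited; the only points requiring a moment's care are that the term-by-term triangle inequality genuinely uses nonnegativity of the coefficients $a_{i\j}$, and that coordinates with $w_i=0$ cause no trouble when passing from $P_i$ to $F_i$.
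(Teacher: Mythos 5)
Your proposal is correct and follows essentially the same route as the paper: take absolute values $w_i=|v_i|$, apply the triangle inequality term-by-term (using nonnegativity of the coefficients) to get $P_i(\w)\ge|\nu|w_i^d$, and conclude via the lower Collatz--Wielandt characterization. The paper's proof is phrased directly in terms of the second expression in~\eqref{e-cw-functionprime} (which is just $\cwl(\bF)$ rewritten for $\bP$), so there is no substantive difference; you have merely spelled out the intermediate steps involving $\bF$ and $\cwl$ that the paper leaves implicit.
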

\begin{proof}
Let $u_i:=|v_i|$ and $\u=(u_1,\ldots,u_n)\trans$. Then, $P_i(\u)\geq |\nu|u_i^d$, and so
\[
 \min_{\scriptstyle i\in [n]\atop \scriptstyle u_i \neq 0}
\frac{P_i(\u)}{u_i^d} \geq |\nu| \enspace .
\]
It follows from~\eqref{e-cw-functionprime} that $\lambda\geq |\nu|$.\qed
\end{proof}
\section{Algorithmic aspects}\label{sec-algo}
The following simple power type algorithm will allow us to compute the vector
$\x$ in~\eqref{Psystem} in an important special case.
Assume as in Corollary~\ref{coro-cw} that each polynomial $P_i$ is
homogeneous of degree $d$, with nonnegative coefficients, and
define the map $\bF$ by~\eqref{eq-1overd}.
Let $\psi>\0$ and consider a sequence $\x^{(k)}=(x_1^{(k)},\ldots,x^{(k)}_n)\trans$
inductively defined by
\[ x^{(k+1)}_i= (\psi\trans\bF(\x^{(k)}))^{-1}F_i(\x^{(k)}), \qquad k=0,1,2,\ldots\]
where $\x^{(0)}$ is an arbitrary vector in the interior of the cone.
We shall say that $\bP$ is \emph{weakly primitive} if the di-graph $G(\bP)$ is strongly connected and if the gcd of the lengths of its circuits is equal to one.

We note that weak primitivity does not imply in general that for each $\x^{(0)}\in\R^n\setminus\{\0\}$
there exists $k\in\N$ such that $\x^{(k)}$ has positive coordinates.  Indeed choose $\bP(\x)=(x_1x_2, x_2^2)\trans: \R_+^2\to\R_+^2$.

The following is readily deduced from a general result of~\cite{Nus88}, see Theorem \ref{nuscor2.5}.
\begin{corol}\label{cor-nussbaum}
Let $\bP$ and $d$ be as in Corollary~\ref{coro-cw}, and assume in addition
that $\bP$ is weakly primitive.  Then, the sequence $\x^{(k)}$ produced by the power
algorithm converges to the unique vector $\u\in \interior \R_+^n$
satisfying $P_i(\u)=\lambda u_i^d$, for $i\in\setint[n]$, and $\psi\trans \u=1$.
\end{corol}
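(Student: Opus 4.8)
The plan is to reduce Corollary~\ref{cor-nussbaum} to Theorem~\ref{nuscor2.5} (Nussbaum's Corollary 2.5), exactly as the paper hints. Define $\bF$ by $F_i(\x)=(P_i(\x))^{1/d}$ as in~\eqref{eq-1overd}; since each $P_i$ is homogeneous of degree $d$ with nonnegative coefficients, $\bF$ is a homogeneous (degree one) monotone map from $\R_+^n$ to $\R_+^n$, and it extends continuously to the closed cone. First I would invoke Theorem~\ref{mtheoa} (or equivalently Corollary~\ref{coro-cw}), whose hypotheses are met since $\bP$ is weakly irreducible: this produces a positive eigenvector $\u\in\interior\R_+^n$, $\bF(\u)=\mu\u$ with $\mu>0$, which after rescaling satisfies $\psi\trans\u=1$, and translating back gives $P_i(\u)=\lambda u_i^d$ with $\lambda=\mu^d$. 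Note $\bF$ is $C^1$ near $\u$ because $P_i(\u)>0$ there, so $x\mapsto x^{1/d}$ is smooth at that point.

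The key step is to check that $A:=\rD\bF(\u)$ is primitive, so that Theorem~\ref{nuscor2.5} applies and yields both uniqueness of the normalized eigenvector and convergence $\bG^{\circ m}(\x)\to\u$ for every $\x\in\interior\R_+^n$ — which is precisely the stated convergence of the power algorithm with $\bG$ as in~\eqref{defG}. To see primitivity, recall that $A$ is a nonnegative matrix whose associated digraph $G(A)$ coincides with the digraph $\cG(\bF)$ of $\bF$ in the sense of Theorem~\ref{thm2gg04}. By the discussion preceding Theorem~\ref{mtheoa}, $\cG(\bF)$ contains $G(\bP)$ (for homogeneous $P_i$ of common degree $d$ there are no extra loops coming from lower-degree monomials, and no loops from $\delta>\delta_i$ since here all degrees equal $d$, so in fact $\cG(\bF)=G(\bP)$). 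A nonnegative matrix is primitive iff its digraph is strongly connected and the gcd of its circuit lengths is one; these are exactly the two conditions in the definition of weak primitivity of $\bP$. Hence $A$ is primitive.

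Having verified the hypotheses, I would apply Theorem~\ref{nuscor2.5}: $\u$ is the unique eigenvector of $\bF$ in $\interior\R_+^n$ with $\psi\trans\u=1$, and $\lim_{m\to\infty}\bG^{\circ m}(\x^{(0)})=\u$ for each $\x^{(0)}\in\interior\R_+^n$. Finally I would observe that the iteration $\x^{(k+1)}_i=(\psi\trans\bF(\x^{(k)}))^{-1}F_i(\x^{(k)})$ is literally $\x^{(k+1)}=\bG(\x^{(k)})$, so $\x^{(k)}=\bG^{\circ k}(\x^{(0)})\to\u$, and since $P_i(\u)=\lambda u_i^d$ this $\u$ is the vector asserted in the statement. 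The main obstacle — really a bookkeeping point rather than a deep one — is the identification $G(A)=\cG(\bF)\supseteq G(\bP)$ together with the precise match between "primitive matrix" and "weakly primitive polynomial map"; one must be slightly careful that the homogeneity and equal-degree hypotheses rule out the stray loops that appear in the general (non-homogeneous) construction of~\S\ref{sec-ext}, so that strong connectivity plus gcd one of $G(\bP)$ genuinely transfers to $A$. Everything else is a direct citation of the quoted theorems.
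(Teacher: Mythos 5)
Your proposal is correct and follows essentially the same route as the paper: define $\bF$ by $F_i=(P_i)^{1/d}$, use weak irreducibility (via Theorem~\ref{mtheoa}) to produce the positive eigenvector $\u$, observe that $G(\rD\bF(\u))=G(\bP)$ in the homogeneous equal-degree case so that weak primitivity of $\bP$ translates to primitivity of $\rD\bF(\u)$, and then invoke Theorem~\ref{nuscor2.5}. Your extra bookkeeping about ruling out stray loops from $\delta>\delta_i$ or lower-degree monomials is a worthwhile clarification of the paper's terser assertion that the di-graph ``is precisely $G(\bP)$,'' but the argument is the same.
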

\begin{proof}
The map $\bF$ defined in (\ref{defFS5}) is differentiable, and its derivative
at any point of the interior of $\R_+^n$ is a nonnegative matrix
the di-graph of which is precisely $G(\bP)$. Hence, this nonnegative matrix
is primitive and the assumptions of Theorem \ref{nuscor2.5}
are satisfied. It follows that $\x^{(k)}$ converges to the only
eigenvector $\u$ of $F$ in the interior of $\R_+^n$ such that $\psi\trans \u=1$.
\qed
\end{proof}

The result of~\cite{Nus88} implies that the convergence of the power algorithm is geometric, and it yields a bound on the geometric convergence rate which tends
to $1$ as the distance between $\x^{(0)}$ and $\u$ in Hilbert's projective metric tends to infinity.
We next estimate the asymptotic speed of convergence.
To do so, it is enough to linearize $\bF$ around $\u$, see for example the arguments in \cite{Fr06}. We give a short proof for reader's convenience,
leading to an explicit formula for the rate.
\begin{corol}\label{coro-spectralgap}
Let $\bP$, $d$, $u$ and $\lambda$ be as in Corollary~\ref{cor-nussbaum}, $\bF$ as in (\ref{defFS5}),
let $M:=\bF'(\u)$, and let $r$ denote the maximal modulus of the eigenvalues
of $M$ distinct from $\lambda$. Then, the sequence
$\x^{(k)}$ produced by the power algorithm satisfies
\[
\limsup_{k\to\infty}\|\x^{(k)}-\u\|^{1/k}\leq \lambda^{-1}r \enspace .
\]
\end{corol}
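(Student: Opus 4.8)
The plan is to reduce the statement to a standard fact about linearized convergence of fixed-point iterations and then extract the rate from the spectral data of $M$. First I would observe that the power algorithm is exactly the iteration $\x^{(k+1)}=\bG(\x^{(k)})$ where $\bG(\x)=(\psi\trans\bF(\x))^{-1}\bF(\x)$ is the map of~\eqref{defG}, normalized to the hyperplane $\{\psi\trans\x=1\}$, and that $\u$ is its fixed point by Corollary~\ref{cor-nussbaum}. Since $\bF$ is $C^1$ on the interior of $\R_+^n$ (it is built from polynomials and a root, and $\u>\0$), so is $\bG$ near $\u$, and the convergence $\x^{(k)}\to\u$ is already known. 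Hence the asymptotic rate is governed by the spectral radius of $\bG'(\u)$ restricted to the tangent hyperplane $H_0:=\{\v:\psi\trans\v=0\}$, which $\bG'(\u)$ leaves invariant because $\bG$ maps into the affine hyperplane $\{\psi\trans\x=1\}$.

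The key computation is to identify $\bG'(\u)$ in terms of $M=\bF'(\u)$. Differentiating $\bG(\x)=(\psi\trans\bF(\x))^{-1}\bF(\x)$ at $\u$, using $\bF(\u)=\lambda\u$ (so $\psi\trans\bF(\u)=\lambda$ after the normalization $\psi\trans\u=1$), gives
\[
\bG'(\u)=\lambda^{-1}M-\lambda^{-1}\u\,(\psi\trans M)=\lambda^{-1}\bigl(I-\u\psi\trans\bigr)M \enspace .
\]
Now $\u$ is a right eigenvector of $M$ for the eigenvalue $\lambda$ (differentiate the homogeneity relation $\bF(t\x)=t\bF(\x)$ at $\x=\u$, Euler's identity, to get $M\u=\lambda\u$), and the spectral radius of $M$ is $\lambda$, a simple eigenvalue with a positive left eigenvector $w\trans$, since $M$ is primitive by the proof of Corollary~\ref{cor-nussbaum}. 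The rank-one correction $I-\u\psi\trans$ annihilates $\u$ (because $\psi\trans\u=1$) and therefore removes precisely the Perron eigendirection while leaving the action on a complementary $M$-invariant subspace essentially unchanged. Concretely, I would decompose $\R^n=\span\{\u\}\oplus H_0$, note $M$ need not preserve $H_0$, but the operator $(I-\u\psi\trans)M$ does preserve $H_0$, and on $H_0$ its eigenvalues are exactly the eigenvalues of $M$ other than the Perron root $\lambda$ (one clean way: the characteristic polynomial of $(I-\u\psi\trans)M$ equals $t$ times the characteristic polynomial of $M$ divided by $(t-\lambda)$, which follows from a matrix-determinant-lemma computation, or from the fact that $(I-\u\psi\trans)M$ and $M$ have the same Jordan structure away from the Perron eigenvalue). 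Hence the spectral radius of $\bG'(\u)$ is $\lambda^{-1}r$ with $r$ as defined in the statement.

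Finally I would invoke the standard local linearization estimate for $C^1$ iterations: if $\x^{(k)}\to\u$ and $\bG$ is $C^1$ near $\u$ with $\bG'(\u)$ having spectral radius $\varrho$, then for every $\varepsilon>0$ one has $\|\x^{(k)}-\u\|=O((\varrho+\varepsilon)^k)$, so $\limsup_k\|\x^{(k)}-\u\|^{1/k}\le\varrho$; applying this on the invariant hyperplane (note $\x^{(k)}-\u\in H_0$ for all $k$, since all iterates satisfy $\psi\trans\x^{(k)}=1$) gives the claimed bound $\limsup_k\|\x^{(k)}-\u\|^{1/k}\le\lambda^{-1}r$. The main obstacle is the linear-algebra step: showing rigorously that peeling off the Perron direction via the rank-one projection $I-\u\psi\trans$ leaves exactly the subdominant eigenvalues of $M$, which requires a careful argument about similarity to a block-triangular form adapted to the splitting $\span\{\u\}\oplus H_0$ rather than a hand-wave; everything else is routine differentiation and the well-known linearization lemma for contracting fixed points.
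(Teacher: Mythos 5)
Your proposal is essentially correct and follows the paper's overall architecture: compute $\bG'(\u)=\lambda^{-1}(I-\u\psi\trans)M$ (same formula), use $M\u=\lambda\u$ from Euler's identity, show $\rho(\bG'(\u))=\lambda^{-1}r$, and finish with the standard spectral-radius/linearization estimate (the paper cites Lemma~5.6.10 of Horn--Johnson for the norm in which $\bG'(\u)$ is an $(\rho+\epsilon)$-contraction). The one place where you genuinely diverge is the linear-algebra step identifying the spectrum of $Q:=(I-\u\psi\trans)M$, which you flag as the main obstacle. The paper handles it by first treating the generic case (distinct nonzero eigenvalues of $M$, with explicit left eigenvectors showing $\varphi_j\trans Q=\lambda_j\varphi_j\trans$ for $j\ge 2$), and then invoking a perturbation argument $M(t)=M+tD$ plus continuity of the spectral radius to cover the degenerate case. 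Your proposed shortcut via the matrix determinant lemma is actually cleaner and entirely rigorous: since $M\u=\lambda\u$ one has $(tI-M)^{-1}\u=(t-\lambda)^{-1}\u$, hence
\[
\det\bigl(tI-Q\bigr)=\det(tI-M)\bigl(1+\psi\trans M(tI-M)^{-1}\u\bigr)=\det(tI-M)\,\frac{t}{t-\lambda}\enspace,
\]
so the spectrum of $Q$ is $\{0,\lambda_2,\ldots,\lambda_n\}$ with no case analysis or limiting argument needed. Your alternative route via the splitting $\R^n=\span\{\u\}\oplus H_0$ also works ($\pi:=I-\u\psi\trans$ conjugates $M$ on the $M$-invariant complement of $\span\{\u\}$ to $Q$ on $H_0$), but it is slightly heavier. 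Finally, the restriction to the hyperplane $H_0$ is harmless but unnecessary: the paper simply uses $\rho(\bG'(\u))=\lambda^{-1}\max(0,r)=\lambda^{-1}r$ as a matrix on all of $\R^n$.
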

\begin{proof}
For all $\alpha>1$, we have $\alpha\lambda \u = \bF(\alpha \u)= \bF(\u)+(\alpha -1)M\u
+o(\alpha-1)=\lambda \u + (\alpha-1)M\u+o(\alpha-1)$, and so $M\u=\lambda \u$,
which shows that $\lambda$ is the Perron root of $M$. Since, as observed
above, $M$ is primitive,
the eigenvalues $\lambda_1,\ldots,\lambda_n$ of $M$ can be ordered
in such a way that $\lambda=\lambda_1> r=|\lambda_2|\geq |\lambda_3|
\geq \cdots \geq |\lambda_n|$.
Consider now $\bG(\x):=(\psi\trans\bF(\x))^{-1}\bF(\x)$. An elementary computation
shows that
\[
\bG'(\u)= \lambda^{-1}(M-\u\psi\trans M) \enspace .
\]
We claim that the spectral radius $\rho(Q)$ of the matrix $Q:=M-\u\psi\trans M$
is equal to $|\lambda_2|$.
Assume first that all the eigenvalues
of $M$ are distinct and that no eigenvalue of $M$ is $0$.
Since $\psi\trans \u=1$, we have $Q\u=M\u-\u\psi\trans M\u=\lambda \u-\lambda \u =0$,
so $0$ is an eigenvalue of $Q$. Moreover, denoting
by $\varphi_j\trans$ a left eigenvector of $M$ for the
eigenvalue $\lambda_j$, with $2\leq j\leq n$,
we get
$\lambda_j\varphi_j\trans\u=\varphi_j\trans M\u=\lambda \varphi_j\trans\u$, and
since $\lambda_j\neq \lambda$, $\varphi_j\trans\u=0$. It follows
that $\lambda_j\varphi_j\trans= \varphi_j\trans Q$.
Hence, the eigenvalues of the matrix $Q$ are precisely
$0,\lambda_2,\ldots,\lambda_n$. Thus, $\rho(Q)=|\lambda_2|$.
Assume now that $M$ does not satisfy the above assumptions.
Let $D=\diag(d_1,\ldots,d_n)$ be a diagonal matrix such that $d_1>d_2>\ldots>d_n>0$.
Consider the matrix $M(t)=M+tD$.  For $t\gg 1$ the matrix $t^{-1}M(t)$ has eigenvalues
$d_i+O(\frac{1}{t})$ for $i=1,\ldots,n$.  Hence $\det M(t)$ vanishes at a finite number of $t$'s.
The discriminant of the polynomial $\det(xI -M(t))$, as a polynomial in $x$, is a nontrivial polynomial of
$t$.  Hence $M(t)$ has simple eigenvalues for all but a finite number of $t$'s.
Thus there exists $\varepsilon>0$ so that for each $t\in(0,\varepsilon)$ $M(t)$ has simple pairwise
distinct eigenvalues which are nonzero. Assume that $t\in(0,\varepsilon)$.
Clearly $M(t)$  is primitive, with the eigenvalues $\lambda_1(t)>|\lambda_2(t)|\ge
\ldots\ge |\lambda_n(t)|$.  Let $M(t)\u(t)=\lambda_1\u(t)$, where $\u(t)>\0$ and $\psi\trans\u(t)=1$.  Hence the spectral radius of $Q(t):=M(t)-\u(t)\psi\trans M(t)$ is $|\lambda_2(t)|$.
As the spectral radius of $Q(t)$ is continuous in its entries, by letting $t\searrow 0$ we deduce that $\rho(Q)=r$.
Thus the spectral radius of $\bG'(\u)$, denoted by $\rho(\bG'(\u))$, is equal to $\lambda^{-1}r$.
Since
\[
\x^{(k+1)}-\u=\bG(\x^{(k)})-\bG(\u)=\bG'(\u)(\x^{(k)}-\u)+o(\|\x^{(k)}-\u\|) \enspace.
\]
A classical result (Lemma~$5.6.10$ in~\cite{HoJo}) shows
that for all $\beta>\rho(\bG'(\u))$, there exists a norm $\|\cdot\|$
on $\mathbb{R}^n$ for which $\bG'(\u)$ is a contraction of rate $\beta$.
By the previous corollary, $\x^{(k)}$ tends to $\u$ as $k\to\infty$.
Hence, for all $\epsilon>0$, we can find an index $k_0$ such that
\[
\|\x^{(k+1)}-\u\| \leq \beta \|\x^{(k)}-\u\|+\epsilon \|\x^{(k)}-\u\| \enspace,
\]
holds for all $k\geq k_0$, and so
\[
\limsup_{k\to\infty} \|\x^{(k)}-\u\|^{1/k}\leq \beta+\epsilon
\enspace .
\]
Taking the infimum over $\beta>\rho(\bG'(\u))$ and $\epsilon>0$, we
deduce that
\[
\limsup_{k\to\infty} \|\x^{(k)}-\u\|^{1/k}\leq \rho(\bG'(\u))
\enspace .
\]
\qed
\end{proof}

We note that the previous corollaries apply in particular to the polynomial
map appearing in our initial problem~\eqref{critveceq}.


 \section{Examples and remarks}\label{sec-ex}
 We first give numerical examples showing that the conclusion
 of Theorem \ref{maintheo} no longer holds for $p<d$.
 Consider first the positive tensor $\cF_1 \in\R^{2\times 2\times
 2}$ with entries
 \begin{equation}
 f_{1,1,1} = f_{2,2,2}=a>0,\; {\rm otherwise,} \ f_{i,j,k}=b>0
 \end{equation}
 So the trilinear form is
 $$f(\x_1,\x_2,\x_3)=b(x_{1,1}+x_{2,1})(x_{1,2}+x_{2,2})(x_{1,3}+x_{2,3})+
 (a-b)(x_{1,1}x_{1,2}x_{1,3}+x_{2,1}x_{2,2}x_{2,3}).
 $$
 Clearly, the system (\ref{critveceq}) for $p_1=p_2=p_3=p>1$
 has a positive solution $\x_1 = \x_2 = \x_3 = (0.5^{1/p}, 0.5^{1/p}
 )\trans$.
 Let
 \begin{equation}
 f_{1,1,1} = f_{2,2,2}=a=1.2; \ {\rm otherwise,} \ f_{i,j,k}=b=0.2.
 \end{equation}
 For this tensor, the system (\ref{critveceq}) has a unique solution
 $\x_1 = \x_2 = \x_3 = (0.5^{1/p}, 0.5^{1/p} )\trans $ for $p \geq 3$.
 However, for $p=2\;(<d=3)$ (\ref{critveceq}), in addition to the above
 positive solution, has two other positive solutions
 $$
 \x_1 = \x_2 = \x_3 \approx (0.9342, 0.3568)\trans
 $$
 and
 $$
 \x_1 = \x_2 = \x_3 \approx (0.3568, 0.9342)\trans.
 $$

 There are weakly irreducible tensors for which the conclusion of Theorem \ref{maintheo} can fail for
 $p$ very close to $d$. As an example, we  consider the positive tensor
 $\cF_2 \in\R^{2\times 2\times
 2}$ with entries
 \begin{equation}
 f_{1,1,1} = f_{2,2,2}=a=1.001; \ {\rm otherwise,} \ f_{i,j,k}=b=0.001.
 \end{equation}
  For tensor $\cF_2$, the system (\ref{critveceq})  has additional two positive solutions:
 $$
 \x_1 = \x_2 = \x_3 \approx (0.9667, 0.4570)\trans
 $$
 and
 $$
 \x_1 = \x_2 = \x_3 \approx (0.4570, 0.9667)\trans
 $$
 when $p=2.99$. \\

 We now show that the results in \cite{CPZ08} do not apply in
 our case for $d>2, p=d$.  For simplicity of the discussion we consider the
 case $d=3$.  The homogeneous eigenvalue problem studied in
 \cite{CPZ08} is for the nonnegative tensor $\cC=[c_{i,j,k}]\in
 \R_+^{n\times n \times n}$.
 It is of the form
 \begin{equation}\label{cpz08pr}
 \sum_{j=k=1}^{n} c_{i,j,k} y_jy_k= \lambda y_i^2,
 \;i\in\setint[n].
 \end{equation}

 The Perron-Frobenius theorem in \cite{CPZ08} is proved for
 irreducible tensors.
 We now show that the induced tensor $\cC$ by the tensor $\cF$
 is reducible, i.e. not irreducible.  To this end let $n=m_1+m_2+m_3$ and
 define
 $$\y=(x_{1,1},\ldots,x_{m_1,1},x_{1,2},\ldots,x_{m_2,2},
 x_{1,3},\ldots,x_{m_3,3})\trans\in\R^n.$$
 Then the system (\ref{critveceq}) for $d=3, p=3$
 can be written as the system (\ref{cpz08pr}).
 We claim that $\cC$ is always reducible.
 Indeed, choose $\y$ corresponding to
 $\x_1=\x_2=\0, \x_3=\1$.  Clearly, the left-hand side of
 (\ref{critveceq}) is zero for all equations.
 Hence the left-hand side of (\ref{cpz08pr}) is zero for all
 $i\in\setint[n]$.
Hence $\cC$ is reducible.

 We close this section with a variation on the classical
 Perron-Frobenius theorem on bilinear form $\x\trans A\y$,
 where $A\in \R_+^{m\times n}$ is a nonnegative matrix.  If the
 bipartite graph $G(A)$ induces a connected bipartite graph
 then the largest singular value of $A$, equal to $\|A\|$,
 is simple, with the unique nonnegative left and right singular
 vectors $\xib,\etab$
 of $A$, corresponding $\|A\|$, of length one which are
 positive.  This is the classical Perron-Frobenius theorem.
 Theorem \ref{maintheo} claims that if the induced bipartite graph $G(A)$
 is connected then the classical Perron-Frobenius theorem holds
 for any $p_1=p_2 \geq d = 2$.  However, in the case
 $p_1=p_2<d=2$ the Perron-Frobenius theorem may fail
 as in the case $d=3$.  Consider the following example.
 $$A =\left[\begin{array}{cccc}

    1.0000  &  0.2000 &   0.2000  &  0.2000\\
    0.2000  &  1.0000 &   0.2000  &  0.2000 \\
    0.2000  &  0.2000 &   1.0000  &  0.2000
    \end{array}\right]
 $$
 When $p_1=p_2=p=1.5<d=2$, the system (\ref{critveceq}) has three solutions:
 \begin{eqnarray*}
 \x=(0.0893, 0.9641, 0.0893)\trans, \;
 \y= (0.0863, 0.9583, 0.0863, 0.0501)\trans,\\
 \x=  (0.0893,    0.0893,    0.9641)\trans,\;
 \y= ( 0.0863,    0.0863,    0.9583,    0.0501)\trans,\\
 \x= (0.9641,   0.0893,    0.0893)\trans,\;
 \y= (0.9583,   0.0863,    0.0863,    0.0501)\trans.
 \end{eqnarray*}

 For the same matrix, if $p_1=1.2$ and $p_2=2.5$, the system (1.2) also has
 three  positive solutions.

 \bibliographystyle{plain}

\end{document}